\newtheorem{thm}{Theorem}[section]
\newtheorem{prop}[thm]{Proposition}
\newtheorem{cor}[thm]{Corollary}
\newtheorem{lem}[thm]{Lemma}
\newtheorem{defi}[thm]{Definition}
\newtheorem{remark}[thm]{Remark}
\newtheorem{example}[thm]{Example}
\newtheorem{pb}[thm]{Problem}
\numberwithin{equation}{section}
\newcommand{\real}{{\mathbb R}}
\newcommand{\com}{{\mathbb C}}
\newcommand{\un}{1\mkern -4mu{\textrm l}}
\newcommand{\M}{{\mathcal M}}
\newcommand{\e}{\varepsilon}
\newcommand{\8}{\infty}
\newcommand{\ra}{\rightarrow}
\newcommand{\n}{\noindent}
\newcommand{\be}{\begin{eqnarray*}}
\newcommand{\ee}{\end{eqnarray*}}
\newcommand{\beq}{\begin{equation}}
\newcommand{\eeq}{\end{equation}}
\newcommand{\beqn}{\begin{equation*}}
\newcommand{\eeqn}{\end{equation*}}
\begin{document}

\title{Noncommutative harmonic analysis on semigroup and ultracontractivity}

\thanks{{\it Key words:} Semigroups, noncommutative $L_p$-spaces, ultracontractivity, Sobolev embedding inequalities, logarithmic Sobolev inequality, noncommutative maximal inequalities, spectral multipliers.}

\author{Xiao Xiong}
\address{Department of Mathematical Sciences, Seoul National University, Gwanak-ro 1, Gwanak-gu,
Seoul 151-747, Republic of Korea}
\email{xxiong@snu.ac.kr}

\date{}
\maketitle

\markboth{X. Xiong}%
{Noncommutative harmonic analysis and ultracontractivity}

\begin{abstract}


We extend some classical results of Cowling and Meda to the noncommutative setting. Let $(T_t)_{t>0}$ be a symmetric contraction semigroup on a noncommutative space $L_p(\M),$ and let the functions $\phi$ and $\psi$ be regularly related. We prove that the semigroup $(T_t)_{t>0}$ is $\phi$-ultracontractive, i.e. $\|T_t x\|_\8 \leq C \phi(t)^{-1} \|x\|_1$ for all $x\in L_1(\M)$ and $ t>0$ if and only if its infinitesimal generator $L$ has the Sobolev embedding properties: $\|\psi(L)^{-\alpha} x\|_q \leq C'\|x\|_p$ for all $x\in L_p(\M),$ where $1<p<q<\8$ and $\alpha =\frac{1}{p}-\frac{1}{q}.$ We establish some noncommutative spectral multiplier theorems and maximal function estimates for generator of $\phi$-ultracontractive semigroup. We also show the equivalence between $\phi$-ultracontractivity and logarithmic Sobolev inequality for some special $\phi$. Finally, we gives some results on local ultracontractivity.


\end{abstract}


\section{Introduction and Main Result}\label{Intro}

Let $L$ be a densely defined positive operator on $L_2(\Omega),$ where $\Omega$ is a $\sigma$-finite measure space. Suppose that $\{P_\lambda\}$ is the spectral resolution of $L$:
$$Lf=\int_0^\8 \lambda \;dP_\lambda f \;,\;\;\;\forall f\in \rm{Dom}(L).$$
If $m$ is a bounded function on $[0,\8),$ then by the spectral theorem, the multiplier operator $m(L)$ defined by
$$m(L)f=\int_0^\8 m(\lambda)\; dP_\lambda f\;,\;\;\;\forall f\in L_2(\Omega),$$
is bounded on $L_2(\Omega).$ Let $(T_t)_{t>0}=(e^{-tL})_{t>0}$ be the operator semigroup generated by $L$ and assume that each $T_t$ has the contraction property:
$$\|T_t f\|_p\leq \|f\|_p\;,\;\;\;\forall f\in L_2(\Omega)\cap L_p(\Omega)$$
whenever $1\leq p\leq \8.$ A semigroup $(T_t)_{t>0}$ with the above properties is called a symmetric contraction semigroup. Stein \cite{Stein1970} developed a Littlewood-Paley theory for such semigroups, with some additional hypotheses.
Coifman and Weiss \cite{CW1976}, Cowling \cite{Cowling1983} presented an alternative and simpler approach to obtain multiplier results and maximal inequalities.

A symmetric contraction semigroup $(T_t)_{t>0}$ that maps $L_1(\Omega)$ into $L_\8 (\Omega)$ for every positive $t$ is said to be ultracontractive. Letting the functions $\phi$ and $\psi$ be regularly related (see below for the precise definition), Cowling and Meda \cite{CM1993} showed that $(T_t)_{t>0}$ is $\phi$-ultracontractive, i.e.
$$\|T_tf\|_\8\leq C \phi(t)^{-1}\|f\|_1\;\;\;\forall f\in L_1(\Omega)\;\;\forall t\in \real^+,$$
if and only if the infinitesimal generator $L$ has Sobolev embedding properties, namely,
$$\|\psi(L)^{-\alpha}f\|_q\leq C'\|f\|_p\;\;\;\forall f\in L_p(\Omega)$$
whenever $1<p<q<\8$ and $\alpha=\frac{1}{p}-\frac{1}{q}.$

Recently, more attention has been turned to symmetric contractive semigroups on noncommutative spaces $L_p(\M)$, associated to $(\M,\tau)$, where $\M$ is a von Neumann algebra with a normal finite faithful trace $\tau.$ Diffusion semigroups on noncommutative $L_p$-spaces were considered for the first time in \cite{JX2002,JX2007}, where the authors developed noncommutative maximal inequalities for such semigroups. Based on these, \cite{JLX2006} studied a noncommutative generalization of Stein's diffusion semigroups considered in \cite{Stein1970}. Then, \cite{JM2010} studied noncommutative Riesz transform associated to a semigroup $(T_t)_{t>0}$ of completely positive maps on a von Neumann algebra with negative generator. BMO and tent spaces associated with semigroups were developed in \cite{Mei2008,JM2011}. We also refer the reader to \cite{Kriegler2011} for the result that semigroups on noncommutative $L_p$-spaces have extension under suitable conditions, similar to those on commutative $L_p$-spaces.

\medskip
In this paper, we study the ultracontractivity of semigroups $(T_t)_{t>0}$ acting on noncommutative $L_p$-spaces. We show that the $\phi$-ultracontractivity of $(T_t)_{t>0}$ is equivalent to the Sobolev embedding properties of its generator $L,$ and establish some spectral multiplier theorems and maximal inequalities. This paper can be viewed as the noncommutative extension of \cite{CM1993}.

To state our main results, some notation and basic knowledge should be in order. Throughout this paper, $\M$ is a von Neumann algebra with a normal finite faithful trace $\tau.$ Let $S^+_{\M}$ be the set of all positive elements $x$
in $\M$ with $\tau(s(x))<\infty$, where $s(x)$ denotes the support of $x$, i.e.,
the smallest projection $e$ such that $exe=x$. Let
$S_{\M}$ be the linear span of
$S^+_{\M}$. Then every $x\in
S_{\M}$ has finite trace, and
$S_{\M}$ is a w*-dense $*$-subalgebra of
$\M$. Let $0< p<\infty$. For any $x\in S_{\M}$, the
operator $|x|^p$ belongs to $S^+_{\M}$
(recalling $|x|=(x^*x)^{\frac{1}{2}}$). We define
$$\|x\|_p=\big(\tau(|x|^p)\big)^{\frac{1}{p}}.$$
One can check that $\|\cdot\|_p$ is a norm on
$S_{\M}$ if $1\leq p<\8$ and is a $p$-norm if $0<p<1$. The completion of
$(S_{\M},\|\cdot\|_p)$ is denoted by $L_p(\M)$,
which is the usual noncommutative $L_p$-space associated to
$(\M,\tau)$. For convenience, we set $L_{\infty}(\M)=\M$
equipped with the operator norm $\|\cdot\|_{\M}$.

We denote by $L_0(\M,\tau)$ the family of all measurable operators. For such an operator $x$, we define
$$\lambda_s(x)=\tau(e^\perp_s(|x|)),\;\;s>0$$
where $e^\perp_s(x)=\un_{(s,\infty)}(x)$ is the spectral projection of $x$ corresponding to the interval $(s,\8),$ and
$$\mu_t(x)=\inf \{s>0: \lambda_s(x)<t\},\;\;t>0.$$
The function $s \mapsto \lambda_s(x)$ is called the distribution function of $x$ and $\mu_t(x)$ the generalized singular numbers of $x$. Similarly to the classical case, for $0<p<\8, 0<q\leq \8,$ the noncommutative Lorentz space $L_{p,q}(\M)$ is defined to be the collection of all measurable operators $x$ such that
$$\|x\|_{p,q}=\big(\int_0^\8 (t^{\frac1p} \mu_t(x))^q \frac{dt}{t}\big)^{\frac1q} <\8.$$
Clearly, $L_{p,p}(\M)=L_{p}(\M)$. The space $L_{p,\8}(\M)$ is usually called a weak $L_p$-space and
$$\|x\|_{p,\8}=\sup_{s>0}s\lambda_s(x)^{\frac1p}.$$
Noncommutative $L_p$-spaces also behave well with respect to interpolation. Let $1\le p_0<p_1\leq \infty$, $1\le q\le \8$ and $0<\eta<1$. Then
 \be
 \big( L_{p_0}(\M),\, L_{p_1}(\M) \big)_{\eta}= L_{p}(\M)\;\text{ and }\;
\big( L_{p_0}(\M),\, L_{p_1}(\M) \big)_{\eta, q}= L_{p,q}(\M),
\ee
where $\frac{1}{p}=\frac{1-\eta}{p_0}+\frac{\eta}{p_1}$. The reader is referred to \cite{PX2003} and  \cite{Xu2007} for more information on noncommutative
$L_p$-spaces.

\medskip

Let $T:\;\M\rightarrow \M$ be a linear map satisfying
\begin{enumerate}[{\rm(i)}]
\item $T$ is a contraction on $\M$: $\|Tx\|_{\8}\leq \|x\|_{\8}\;,$
\item $\tau\circ T\leq \tau:$ for all $x\in L_1(\M)\cap\M_+$, $\tau(Tx)\leq \tau(x)$,
\item $T$ is positive: $x>0$ implies $Tx>0$,
\item $T$ is symmetric relative to $\tau: \tau(T(y)^*x)= \tau(y^*T(x))$ for all $x,y\in L_2(\M)\cap\M\;.$
\end{enumerate}
Properties (i), (ii) and (iii) imply $\|Tx\|_p\leq \|\M\|_p$ for every $1\leq p< \8$ (see\cite[Lemma~1.1]{JX2007}). Thus, $T$ naturally extends to a contraction on $L_p(\M)$ for every $1\leq p \leq \8$. In this paper, we are interested in the symmetric positive contraction semigroup $(T_t)_{t>0}$. This means that for each $t>0$ the operator $T_t$ satisfies the above (i)-(iv) and that for any
$x \in \M$, $T_t(x) \rightarrow x$ in the $w^*$-topology of $\M$ when $t \rightarrow 0^+.$ Such a semigroup admits an infinitesimal (negative) generator $L$, i.e., $T_t = e^{-tL}$. See \cite{JLX2006,JX2007} for more details.

\medskip

The norm of a bounded linear operator $T$ on a noncommutative $L_p$-space $L_p(\M)$ or Lorentz space $L_{p,q}(\M)$ will be denoted by $|||T|||_p$ or $|||T|||_{p,q}$ respectively. An operator which is bounded from $L_p(\M)$ to $L_q(\M)$ or from $L_p(\M)$ to $L_{q,\8}(\M)$ is said to be of strong type $(p,q)$ or weak type $(p,q)$ respectively. The corresponding norms are denoted by $|||T|||_{p\ra q}$ and $|||T|||_{p\ra (q,\8)}$ respectively. The sector $\{z\in \mathbb{C} : |\arg(z)|<\omega\}$ is denoted by $\Gamma_\omega,$ and the half-plane $\Gamma_{\frac  \pi 2}$ by $\Gamma$ simply. We denote $H^\8(D)$ the space of bounded holomorphic functions on the domain $D.$ Constants will be denoted by letters $A$, $C$ etc., sometimes with subscripts to indicate which the constants depend on.

\medskip

Now we state the main result of this paper. Given a semigroup $T_t$ and a positive function $\phi$ on $\real^+,$ we say that $T_t$ is $\phi$-ultracontractive if there exists a constant $A$ such that $\|T_tx\|_\8\leq A\phi(t)^{-1}\|x\|_1$ holds for all $t>0$. Our main result concerns how such ultracontractive estimates may be related to the Sobolev inequalities of the infinitesimal generator of $T_t.$ Following \cite{CM1993}, we say that a pair of functions $\phi:\real^+\rightarrow \real^+$ and $\psi:\Gamma \rightarrow \mathbb{C}$ is regularly related if
\begin{enumerate}[{\rm(1)}]
\item $\phi$ is increasing, continuous and surjective;
\item $\phi$ satisfies the $\Delta_2$ condition: there exists a constant $C_{\phi}$ such that
   $$\phi(2t)\leq C_{\phi} \phi(t), \;\;\forall t \in \real^+;$$
\item for all $\theta\in \real^+,$ there exists a constant $D_{\phi,\theta}$ such that
     $$\int_0^t \phi(s)^\theta \frac{ds}{s} \leq D_{\phi,\theta} \phi(t)^\theta,\;\;\forall t\in \real^+$$ and
     $$\int_t^\8 \phi(s)^{-\theta} \frac{ds}{s} \leq D_{\phi,\theta} \phi(t)^{-\theta},\;\;\forall t\in \real^+;$$
\item $\psi$ is holomorphic in $\Gamma$, and $\psi(\overline{z})=\overline{\psi(z)}$ for all $z\in \Gamma;$
\item for all $\omega\in (0,\frac{\pi}{2}),$ there exists a constant $C_{\psi,\omega}$ such that
      $$\inf\{|\psi(z)|:z\in \Gamma_{R,\omega} \} \geq C_{\psi,\omega}\sup \{|\psi(z)|: z\in \Gamma_{R,\omega}\}, \;\;\forall\; R\in\real^+ $$ where $\Gamma_{R,\omega}=\{z\in \mathbb{C}: R<{\rm{Re}}(z)<2R, |\arg(z)|<\omega\};$
\item for all $\theta\in \real^+,$ there exists a real-valued measurable function $\phi_\theta,$ equivalent to $\phi^\theta$ in the sense that the function $ \big|\frac{\phi(\cdot)^\theta }{ \phi_\theta (\cdot)}\big|$ is bounded and bounded away from $0$ on $ \real^+,$ with the property that, if $$\psi_\theta (z)= \int_0^\8 e^{-zt}\phi_\theta (t)\frac{dt}{t},\;\;\forall z\in \Gamma,$$ then $\psi_\theta$ is equivalent to $\psi^{-\theta}$ in the sense that the function $ |\psi(\cdot)^\theta \psi_\theta (\cdot)|$ is bounded and bounded away from $0$ in any proper sub-sector $\Gamma_\omega \subset \Gamma.$

\end{enumerate}
Some examples of such pairs are given in \cite{CM1993}:
\begin{enumerate}[{\rm(a)}]
\item $\phi(t)=t^\alpha$ and $\psi(z)=z^\alpha$ for positive $\alpha;$
\item $\phi(t)=t^\alpha(1+t)^{\beta-\alpha}$ and $\psi(z)=z^\beta(1+z)^{\alpha-\beta}$ for positive $\alpha,\beta\in \real^+;$
\item $\phi(t)=t^\alpha\log(2+t)^\beta$ and $\psi(z)=z^\alpha\log(2+\frac{1}{z})^{-\beta}$ for positive $\alpha,\beta\in \real^+.$
\end{enumerate}
The following is our main theorem about ultracontractivity.

\begin{thm}\label{phipsiultra}
Let $\phi$ and $\psi$ be a pair of regularly related functions. Then the following properties are equivalent:
\begin{enumerate}[{\rm(i)}]
\item $\|T_tx\|_\8\leq A\phi(t)^{-1}\|x\|_1,\;\forall t\in\real^+,\;\forall x\in L_1(\M);$

\item for all $p,q$ and $\alpha$ such that $1\leq p<q\leq \8,$ and $\alpha=\frac{1}{p}-\frac{1}{q}$,
     $$\|T_tx\|_q\leq A'\phi(t)^{-\alpha}\|x\|_p,\;\forall t\in\real^+,\;\forall x\in L_p(\M);$$
\item there exist $p,q$ and $\alpha$ such that $1\leq p<q\leq \8,$ and $\alpha=\frac{1}{p}-\frac{1}{q}$,
     $$\|T_tx\|_q\leq A'\phi(t)^{-\alpha}\|x\|_p,\;\forall t\in\real^+,\;\forall x\in L_p(\M);$$
\item for all $r$ and $\alpha$ such that $1<r<\8$ and $\alpha=1-\frac{1}{r},$
     $$\|\psi(L)^{-\alpha}x\|_{r,\8}\leq C_r\|x\|_1,\;\forall x\in L_1(\M);$$
\item for all $p,q$ and $\alpha$ such that $1<p<q<\8$ and $\alpha=\frac{1}{p}-\frac{1}{q},$
     $$\|\psi(L)^{-\alpha}x\|_q\leq C_{p,q}\|x\|_p,\;\forall x\in L_p(\M);$$
\item there exist $p,q$ and $\alpha$ such that $1<p<q<\8$ and $\alpha=\frac{1}{p}-\frac{1}{q},$
     $$\|\psi(L)^{-\alpha}x\|_q\leq C \|x\|_p,\;\forall x\in L_p(\M);$$
\item there exist $p,q$ and $\alpha$ such that $1<p<q<\8$ and $\alpha=\frac{1}{p}-\frac{1}{q},$
     $$\|\psi(L)^{-\alpha}x\|_{q,\8}\leq C \|x\|_p,\;\forall x\in L_p(\M).$$
\end{enumerate}

\end{thm}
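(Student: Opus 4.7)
The proof follows the scheme of Cowling--Meda \cite{CM1993}, split into three parts, with the noncommutative subtleties concentrated in the interpolation and spectral multiplier machinery.

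\textbf{Part I (equivalence of (i), (ii), (iii)).} The implication (ii)$\Rightarrow$(iii) is trivial. For (i)$\Rightarrow$(ii) I would use noncommutative complex interpolation between the $L_1\to L_\infty$ bound of (i) and the contraction $\|T_tx\|_p\le \|x\|_p$ (valid for $1\le p\le\infty$ by the four semigroup axioms and \cite[Lemma~1.1]{JX2007}). For (iii)$\Rightarrow$(i) I would use a Varopoulos-style iteration: the self-adjointness of $T_t$ dualizes the $L_{p_0}\to L_{q_0}$ bound of (iii) into an $L_{q_0'}\to L_{p_0'}$ bound of the same form, and composing these via $T_{nt}=T_t^n$ and absorbing powers of $t$ by the $\Delta_2$ condition (2) on $\phi$ reaches the $L_1\to L_\infty$ endpoint.

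\textbf{Part II (bridge (i)$\Leftrightarrow$(v)).} This is the core of the argument. Property~(6) of a regularly related pair gives the subordination representation $\psi_\alpha(L)=\int_0^\infty T_t\,\phi_\alpha(t)\,dt/t$, while the equivalence $\psi_\alpha\sim\psi^{-\alpha}$ in proper sub-sectors, combined with an $H^\infty$-functional calculus on $L_p(\M)$ justified by properties (4) and (5), yields
\beqn
\psi(L)^{-\alpha}x \;\sim\; \int_0^\infty T_tx\,\phi_\alpha(t)\,\frac{dt}{t}
\eeqn
modulo bounded operators. For (i)$\Rightarrow$(v), I would split this integral at a parameter $t_0$, applying the $L_p\to L_p$ contraction on $(0,t_0)$ and the strong bound of (ii) on $(t_0,\infty)$; property~(3) on $\phi_\alpha\sim\phi^\alpha$ controls both pieces. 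For (v)$\Rightarrow$(i), I would factor $T_t=(\psi(L)^\alpha e^{-tL})\circ\psi(L)^{-\alpha}$; the second factor is bounded $L_p\to L_q$ by (v), while the first is a spectral multiplier whose symbol has supremum $\sim\phi(t)^{-\alpha}$ on the positive real axis.

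\textbf{Part III (weak-type and Lorentz versions).} The implications (v)$\Rightarrow$(vi) and (vi)$\Rightarrow$(vii) are trivial. To close the loop and incorporate (iv), I would prove (i)$\Rightarrow$(iv) by a refinement of the split argument of Part~II producing a weak-type $(1,r)$ bound: instead of splitting at a fixed $t_0$, one splits at a level determined by $\mu_t(x)$ and uses the $L_1$-contractivity plus ultracontractivity on the two pieces. Then (iv)$\Rightarrow$(v) and (vii)$\Rightarrow$(vi) follow by noncommutative real interpolation between the given weak-type bound and another weak-type endpoint obtained by duality (self-adjointness of $\psi(L)^{-\alpha}$) combined with (iv) itself.

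\textbf{Main obstacle.} The deepest step is the $H^\infty$-calculus needed in Part~II, specifically the claim that $\psi(L)^\alpha e^{-tL}$ is bounded on $L_q(\M)$ with operator norm proportional to its sup-norm $\sim\phi(t)^{-\alpha}$ on the spectrum of $L$. In the commutative case this is immediate from a classical multiplier theorem for symmetric diffusion semigroups, but in the noncommutative setting the sectorial $H^\infty$-functional calculus on $L_q(\M)$ requires the deeper analytic-continuation and dilation machinery of \cite{JLX2006,JM2010,Kriegler2011}. The weak-type refinement needed for (i)$\Rightarrow$(iv) is also delicate because the usual Calder\'on--Zygmund level-set decomposition must be replaced by Cuculescu-type projections adapted to the noncommutative $L_1$-setting.
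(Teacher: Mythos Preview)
Your overall scheme matches the paper's, and the key noncommutative ingredient you identify---the sectorial $H^\infty$ calculus needed to bound $\psi(L)^\alpha e^{-tL}$ on $L_p(\M)$---is exactly what the paper invokes, though it takes this from the Jiao--Wang multiplier theorem \cite{JW2014} (the inequalities \eqref{mstrongpp} and \eqref{mweakqq}) rather than from \cite{JLX2006,JM2010,Kriegler2011}.

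There is, however, a genuine gap in your Part~II direct route (i)$\Rightarrow$(v). With $\alpha=\tfrac1p-\tfrac1q$, splitting $\int_0^\infty \phi_\alpha(t)T_tx\,\tfrac{dt}{t}$ at a fixed $t_0$ and invoking the $L_p\to L_q$ bound of (ii) on the tail yields $\int_{t_0}^\infty \phi(t)^\alpha\phi(t)^{-\alpha}\,\tfrac{dt}{t}=\int_{t_0}^\infty\tfrac{dt}{t}$, which diverges; property~(3) gives nothing at the critical exponent. The paper (Lemma~\ref{iimpliesiv}) instead targets an intermediate exponent $\beta=\tfrac1p-\tfrac1r<\alpha$, splits at a \emph{variable} point $s$ chosen so that $\phi(s)=(\|x\|_p/\epsilon)^{(q-p)/(\beta p-\beta q+\alpha q)}$, and applies Chebyshev to the $L_p$ and $L_q$ pieces separately to obtain the weak-type estimate (iv). Strong type (v) then follows by real interpolation between (iv) and its dual. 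Your Part~III already contains this correct idea; Part~II as written should be rerouted through (iv).

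Two smaller corrections. First, no Cuculescu projections or noncommutative Calder\'on--Zygmund decomposition are needed for (i)$\Rightarrow$(iv): the distribution function $\lambda_\epsilon$ and the Chebyshev inequality $\lambda_\epsilon(y)\le(\|y\|_p/\epsilon)^p$ behave exactly as in the commutative case, so the weak-type argument is elementary once the splitting point is chosen. Second, the paper does not close the loop via (vii)$\Rightarrow$(vi) by interpolation with the dual; it proves (vii)$\Rightarrow$(iii) directly (Lemma~\ref{viiimpliesiii}) using the same factorization $T_t=\psi(L)^{-\alpha}\circ(\psi(L)^\alpha e^{-tL})$ that you use for (v)$\Rightarrow$(i), now combined with the weak-$L_q$ multiplier bound \eqref{mweakqq} and real interpolation against $\|T_t\|_{p\to p}\le 1$. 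This is cleaner than interpolating (vii) with its Lorentz-space dual.
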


The above theorem will be proved in section 2. In section 3, we will establish multiplier and maximal operator results for such ultracontactive semigroups. Let $m$ be a complex-valued measurable function on $\real^+$. Recall that the multiplier operator is defined by
$$m(L)x = \int_0^\8 m(\lambda)\;dP_\lambda x,$$
for $x$ in some appropriate domain. We will give sufficient conditions on $m$ to ensure the boundedness of $m(L)$ from $L_p(\M)$ to $L_q(\M)$ (or weak $L_q$). A H\"{o}rmander-like multiplier result will also be established for ultracontractive semigroup whose generator $L$ satisfies
 $$|||L^{iu}|||_p\leq C_p (1+|u|)^{\sigma |\frac 1 p-\frac 1 2|},\;\;\forall\; u\in \real,$$
or its weak form. Finally in this section, we will investigate the maximal operator $\sup_{t>0}|t^\alpha m(tL)|$ acting on $\M$, and also give some additional hypothesis to make it bounded of strong (or weak) type $(p,q)$.

In section \ref{log-Sb}, we will show the equivalence between $\phi$-ultracontractivity and the following logarithmic Sobolov inequality for the infinitesimal generator $L$:
\be
\tau(x^2 \log x) \leq \e \tau(xLx) +C M(\e)\|x\|_2^2+\|x\|_2^2\log \|x\|_2\,.
\ee
Here $\phi(t)=e^{-M(t)}$ with some monotonically decreasing function $M(t)$. In the last section we give some results for local ultracontractivity.


\section{Proof of the Main Theorem}


We recall one of the principal results of Jiao and Wang \cite{JW2014}, which extends the main result of \cite{Cowling1983} to the noncommutative case, and will be a main tool in this section. Suppose that $1<p<\8$, and $m$ is a bounded function on $[0,\8)$. If $\omega>\pi |\frac{1}{p}-\frac{1}{2}|,$ and $m$ extends to an $H^\8 (\Gamma_\omega)$-function, denoted $m_\omega,$ then $m(L)$ is of strong type $(p,p)$, and
\beq\label{mstrongpp}
|||m(L)|||_p\leq C_{p,\omega}\|m_\omega\|_\8.
\eeq
Applying the real interpolation method, we can easily deduce that $m(L)$ is also bounded on $L_{q,\8}(\M)$ for $q$ lies between $2$ and $p,$ and that
\beq\label{mweakqq}
|||m(L)|||_{q,\8}\leq C_{q,\omega}'\|m_\omega\|_\8.
\eeq
Like in \cite{JW2014}, we also suppose that the spectral projection $P_0$ onto the kernel of $L$ is trivial on $L_p(\M)$ in the sequel.

Let us turn to proving Theorem \ref{phipsiultra}. This theorem has been proved by Junge and Mei in \cite{JM2010} for the special pair of functions $(t^{\frac n 2}, z^{\frac n 2} ).$ The equivalence of (i), (ii) and (iii), ensured by interpolation and duality in \cite{JM2010}, holds for general $\phi$ as well; the proof is exactly the same as the one in the classical case \cite{CM1993}. To show their relation with the Sobolev inequalities of $\psi(L)^{-\alpha}$, we will establish the implications (i)$\Rightarrow$(iv) and (vii)$\Rightarrow$(iii), which are entitled Lemmas \ref{iimpliesiv} and \ref{viiimpliesiii} below respectively.

\begin{lem}\label{iimpliesiv}
Suppose that $(T_t)_{t>0}$ is a semigroup with the conditions
$$\|T_t x\|_p \leq C\|x\|_p\;,\;\forall \; t\in \real^+,\;\forall \;x\in L_p(\M)$$
and
$$\|T_tx\|_q\leq C\phi(t)^{-\alpha}\|x\|_p\;,\;\forall\; t\in \real^+,\;\forall\; x\in L_p(\M)$$
for fixed $1\leq p < q \leq \8$ and $\alpha=\frac{1}{p}-\frac{1}{q}$. Assume in additional that $p<r<q$ and $\beta=\frac{1}{p}-\frac{1}{r}.$ Then $\psi(L)^{-\beta}$ is of weak type $(p,r)$:
$$\|\psi(L)^{-\beta}x\|_{r,\8}\leq C_{p,q,r}\|x\|_p,\;\;\forall x\in L_p(\M).$$

\end{lem}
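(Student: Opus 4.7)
The plan is to represent $\psi(L)^{-\beta}$ as a bounded spectral multiplier composed with a subordinated integral of the semigroup, and then obtain the weak-type $(p,r)$ bound by a Marcinkiewicz-style splitting. First I would invoke condition (6) of the regular relation with $\theta=\beta$ to pick $\phi_\beta$ equivalent to $\phi^\beta$ so that $\psi_\beta(z)=\int_0^\infty e^{-zt}\phi_\beta(t)\frac{dt}{t}$ is equivalent to $\psi^{-\beta}$ on every proper sub-sector $\Gamma_\omega\subset\Gamma$. By spectral calculus (using the standing assumption $P_0=0$ on $L_p(\M)$),
$$\psi_\beta(L)x=\int_0^\infty \phi_\beta(t)\,T_tx\,\frac{dt}{t}.$$
Factoring $\psi(L)^{-\beta}=m(L)\psi_\beta(L)$ with $m=\psi^{-\beta}/\psi_\beta$ bounded on each $\Gamma_\omega$, one applies the multiplier theorem \eqref{mstrongpp} on an auxiliary $L_{p_0}(\M)$ and its interpolated weak-type counterpart \eqref{mweakqq} to conclude that $m(L)$ is bounded on $L_{r,\infty}(\M)$ (any $1<r<\infty$ being admissible). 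It therefore suffices to prove the weak-type $(p,r)$ estimate for $\psi_\beta(L)$ in place of $\psi(L)^{-\beta}$.

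For any $s>0$, split $\psi_\beta(L)x=A_sx+B_sx$ with
$$A_sx=\int_0^s\phi_\beta(t)\,T_tx\,\frac{dt}{t},\qquad B_sx=\int_s^\infty\phi_\beta(t)\,T_tx\,\frac{dt}{t}.$$
Minkowski's inequality, the hypothesis $\|T_tx\|_p\leq C\|x\|_p$, the equivalence $\phi_\beta\asymp\phi^\beta$ from condition (6), and condition (3) of the regular relation yield
$$\|A_sx\|_p\leq C\int_0^s\phi(t)^\beta\frac{dt}{t}\|x\|_p\leq C_1\phi(s)^\beta\|x\|_p.$$
Likewise the hypothesis $\|T_tx\|_q\leq C\phi(t)^{-\alpha}\|x\|_p$ and condition (3)---applicable because $\beta-\alpha=\frac1q-\frac1r<0$ makes the tail integral of $\phi^{\beta-\alpha}$ convergent---give
$$\|B_sx\|_q\leq C\int_s^\infty\phi(t)^{\beta-\alpha}\frac{dt}{t}\|x\|_p\leq C_2\phi(s)^{\beta-\alpha}\|x\|_p.$$

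For the weak-type bound I would use the subadditivity of singular values $\mu_t(A_sx+B_sx)\leq\mu_{t/2}(A_sx)+\mu_{t/2}(B_sx)$ combined with Chebyshev $\mu_{t/2}(y)\leq(t/2)^{-1/p_0}\|y\|_{p_0}$ to obtain
$$\mu_t(\psi_\beta(L)x)\leq C_1'(t/2)^{-1/p}\phi(s)^\beta\|x\|_p+C_2'(t/2)^{-1/q}\phi(s)^{\beta-\alpha}\|x\|_p.$$
For each $t>0$, choose $s$ with $\phi(s)=t$, which is possible because $\phi$ is continuous, increasing, and surjective by condition (1). The algebraic identity
$$\beta-\tfrac1p=-\tfrac1r=(\beta-\alpha)-\tfrac1q$$
then reduces both summands to a constant multiple of $t^{-1/r}\|x\|_p$, so $\sup_{t>0}t^{1/r}\mu_t(\psi_\beta(L)x)\leq C\|x\|_p$, which is the desired weak-type $(p,r)$ estimate.

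The main obstacle is the rigorous justification of the integral representation of $\psi_\beta(L)$ against the noncommutative semigroup and the factorization $\psi(L)^{-\beta}=m(L)\psi_\beta(L)$ via functional calculus on $L_p(\M)$---in particular, convergence of the Bochner-type integral and the identification of $m$ as a bounded multiplier on every relevant sector. Once this multiplier reduction is in place, the splitting-plus-Marcinkiewicz step is a straightforward transcription of Cowling--Meda into the noncommutative setting through generalized singular values.
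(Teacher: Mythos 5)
Your proposal is correct and follows essentially the same route as the paper: reduce $\psi(L)^{-\beta}$ to $\psi_\beta(L)$ via the bounded multiplier $\kappa=\psi^{-\beta}/\psi_\beta$ (condition (6) together with \eqref{mstrongpp} and its interpolated weak form \eqref{mweakqq}), split the subordination integral at $s$, estimate the two pieces in $L_p$ and $L_q$ using condition (3) and the hypotheses, and balance; your singular-number formulation with the choice $\phi(s)=t$ is just the $\mu_t$-version of the paper's distribution-function argument with its choice of $s$ depending on $\|x\|_p/\epsilon$. The only point you flag but do not carry out---justifying the domain and the functional-calculus factorization---is handled in the paper by a short density argument: $(I-T_t)^k x\in \mathrm{Dom}(\psi(L)^{-\beta})$ for $k$ large and $(I-T_t)^k x\to x$ in $L_p(\M)$ as $t\to\infty$ since $P_0$ is trivial, so $L_p(\M)\cap\mathrm{Dom}(\psi(L)^{-\beta})$ is dense in $L_p(\M)$.
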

\begin{proof}
Observe that the $\Delta_2$ condition of $\phi$ implies that $\phi$ grows at most polynomially at infinity and
$$|\psi(z)^{-\beta}|\leq C|\psi_\beta (z)|\leq C\int_0^\8 e^{-zt}\phi(t)^\beta \frac{dt}{t}\;\;<\8,\;\;\forall z \in \real^+.$$
Thus, if $x$ is in $L_2(\M)\cap L_p(\M)$ and $k$ is large enough, $(I-T_t)^k x$ lies in $\rm{Dom} (\psi(L)^{-\beta})$ for all $t\in\real^+.$ Moreover, as $t$ tends to $\8,$ $(I-T_t)^k x$ tends to $x$ in $L_p(\M)$, since the spectral projection onto the kernel of $L$ is trivial. So that $L_p(\M)\cap {\rm{Dom} }(\psi(L)^{-\beta})$ is dense in $L_p(\M).$

Take an $x\in L_p(\M)\cap {\rm{Dom}} (\psi(L)^{-\beta})$. We need to estimate the $L_{r,\8}$ norm of $\psi(L)^{-\beta}x.$
But by \eqref{mweakqq} and the regular property (6) of $(\phi, \psi)$,
$$\|\psi(L)^{-\beta}x\|_{r,\8}= \|\kappa(L)\circ \psi_{\beta}(L)x\|_{r,\8}\leq C\| \psi_{\beta}(L)x\|_{r,\8},$$
for function $\kappa(z)=[\psi(z)^{\beta}\psi_{\beta}(z)]^{-1}$ whose absolute value is bounded and bounded away from $0$ in any proper sub-cone $\Gamma_\omega \subset \Gamma.$ Thus we are reduced to estimate the $L_{r,\8}$ norm of $\psi_\beta (L)x.$

  By definition, we need to control the trace of $\un_{(\epsilon,\infty)}(|\psi_\beta (L)x |)$, namely, $\lambda_\epsilon (\psi_\beta (L)x )$ for $\epsilon>0.$ To this end, we take $s\in\real^+$ to be chosen later. By the spectral theory,
\be\begin{split}
\psi_\beta (L)x &=\int_0^\8 \phi_\beta (t)T_t x\frac{dt}{t}\\
&=\int_0^s \phi_\beta (t)T_t x\frac{dt}{t}+\int_s^\8 \phi_\beta (t)T_t x\frac{dt}{t}\\
&\triangleq B_{\beta,s}x+D_{\beta,s}x.
\end{split}\ee
We estimate the two parts in the sum. By the property (3) of $\phi$,
\be\begin{split}
\| B_{\beta,s}x\|_p&\leq \int_0^s \phi_\beta (t)\|T_t x\|_p\frac{dt}{t}\\
&\leq C \int_0^s \phi(t)^\beta \| x\|_p \frac{dt}{t} \leq C  \phi(s)^\beta \|x\|_p,
\end{split}\ee
while
\be\begin{split}
\| D_{\beta,s}x\|_q &\leq \int_s^\8 \phi_\beta (t)\|T_t x\|_q \frac{dt}{t}\\
&\leq C \int_s^\8  \phi(t)^{\beta-\alpha} \| x\|_p \frac{dt}{t} \leq C  \phi(s)^{\beta-\alpha} \|x\|_p.
\end{split}\ee
Now since $$\lambda_\epsilon (\psi_\beta (L)x )\leq \lambda_{\frac \epsilon  2}( B_{\beta,s}x )+\lambda_{\frac \epsilon 2} (D_{\beta,s}x )\;,$$
if $s$ is chosen so that $\phi(s)=\big(\frac{\|x\|_p}{ \epsilon}\big)^{\frac{(q-p)}{(\beta p-\beta q + \alpha q)}},$ then we have from the Chebychev inequality that
\be\begin{split}
\lambda_\epsilon (\psi_\beta (L)x )&\leq \big(\frac{\|B_{\beta,s}x \|_p}{\frac 1 2  \epsilon }\big)^p +\big(\frac{\|D_{\beta,s}x \|_q}{\frac 1 2 \epsilon }\big)^q\\
&\leq \big(\frac{2C\phi(s)^\beta \|x \|_p}{\epsilon }\big)^p +\big(\frac{2C \phi(s)^{\beta-\alpha}\| x \|_p}{\epsilon}\big)^q\\
&=C_{p,q,r}\big(\frac{\|x\|_p}{\epsilon} \big)^r,
\end{split}
\ee
which finishes the proof.

\end{proof}

\begin{lem}\label{viiimpliesiii}
Suppose that $(T_t)_{t>0}$ is a symmetric contraction semigroup and that
$$\|\psi(L)^{-\alpha} x\|_{q,\8}\leq C\|x\|_p,\;\;\forall x\in L_p(\M)$$
for some $p,q $ and $\alpha$ satisfying $1\leq p<q\leq \8$ and $\alpha =\frac{1}{p}-\frac{1}{q}<1.$ Then if $p<r<q$ and $\beta =\frac{1}{p}-\frac{1}{r},$
$$\|T_tx\|_r\leq A_{p,q,r} \phi(t)^{-\beta}\|x\|_p,\;\;\forall t\in\real^+,\;\forall x\in L_p(\M).$$
\end{lem}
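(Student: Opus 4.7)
My plan is to reverse the strategy of Lemma \ref{iimpliesiv}: starting from the Sobolev-type hypothesis on $\psi(L)^{-\alpha}$, I will derive the ultracontractive bound on $T_t$ via the trivial functional-calculus factorization
\[
T_t\;=\;m_t(L)\cdot\psi(L)^{-\alpha},\qquad m_t(\lambda)\,:=\,e^{-t\lambda}\,\psi(\lambda)^\alpha,
\]
followed by the Jiao--Wang multiplier theorem and a real-interpolation step against the $L_p$-contractivity of $T_t$. The argument splits into an $H^\8$-bound for $m_t$, a resulting weak-type bound for $T_t$, and the final interpolation.

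The technical heart is the claim $\|m_t\|_{H^\8(\Gamma_\omega)}\leq C_\omega\,\phi(t)^{-\alpha}$ for every $\omega\in(0,\pi/2)$. Property (5) of regularly related pairs reduces the supremum on $\Gamma_\omega$ to $\sup_{r>0}e^{-tr\cos\omega}|\psi(r)|^\alpha$, since $|\psi|$ is essentially radial on each proper subsector. Property (6) then replaces $|\psi(r)|^\alpha$ by $\psi_\alpha(r)^{-1}$. A direct evaluation of the Laplace integral $\psi_\alpha(r)=\int_0^\8 e^{-rs}\phi_\alpha(s)ds/s$, using property (3) on the range $s<1/r$ and exponential decay against the $\Delta_2$-polynomial growth of $\phi^\alpha$ on the tail $s>1/r$, yields $\psi_\alpha(r)\asymp \phi(1/r)^\alpha$. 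Substituting $s=1/r$, the target becomes $\sup_{s>0} e^{-t\cos\omega/s}\,\phi(s)^{-\alpha}\leq C_\omega\,\phi(t)^{-\alpha}$, the maximum being attained near $s\asymp t$ and the remaining regimes controlled again by $\Delta_2$.

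Given this $H^\8$ estimate, the Jiao--Wang multiplier theorem applied with some $\omega>\pi|1/q-1/2|$ (valid since $1<q<\8$) gives $|||m_t(L)|||_{q,\8}\leq C\phi(t)^{-\alpha}$ via \eqref{mweakqq}. Composing with the hypothesis $|||\psi(L)^{-\alpha}|||_{p\to(q,\8)}\leq C$, the factorization produces the weak type $(p,q)$ estimate
\[
\|T_tx\|_{q,\8}\;\leq\; C\,\phi(t)^{-\alpha}\,\|x\|_p,\qquad\forall\, x\in L_p(\M).
\]
Since $T_t$ is contractive on $L_p(\M)$ (consequence of the symmetric contraction properties (i)--(iv)), one has trivially $\|T_tx\|_{p,\8}\leq\|x\|_p$. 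Real interpolation of weak Lorentz spaces, $(L_{p,\8}(\M),L_{q,\8}(\M))_{\eta,r}=L_r(\M)$ with $1/r=(1-\eta)/p+\eta/q$ and the second Lorentz index chosen equal to $r$, then yields
\[
\|T_tx\|_r\;\leq\; C_r\,\phi(t)^{-\alpha\eta}\,\|x\|_p,
\]
and the identity $\alpha\eta=1/p-1/r=\beta$ delivers the conclusion.

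The main obstacle is the $H^\8$-bound of the first step, which chains nearly every clause in the definition of a regularly related pair (properties (3), (5), (6) plus $\Delta_2$) to pin down the asymptotic $|\psi(r)|^\alpha\asymp\phi(1/r)^{-\alpha}$ and to locate the extremum of $e^{-t/s}\phi(s)^{-\alpha}$; everything downstream is a routine application of the Jiao--Wang theorem and Marcinkiewicz-type interpolation. One extra technicality arises at $q=\8$, where $L_{q,\8}$ falls outside the scope of \eqref{mweakqq}; in that case the hypothesis is already an $L_p\to L_\8$ bound, so the multiplier factor is placed on the $L_p$ side of the factorization (using \eqref{mstrongpp} on $L_p$ directly) and the same interpolation argument closes the case.
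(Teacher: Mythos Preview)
Your proof is correct and follows essentially the same route as the paper: factor $T_t=e^{-tL}\psi(L)^{\alpha}\cdot\psi(L)^{-\alpha}$, bound the multiplier $m_t(\lambda)=e^{-t\lambda}\psi(\lambda)^{\alpha}$ in $H^\infty(\Gamma_\omega)$ by $C\phi(t)^{-\alpha}$ via the Jiao--Wang theorem, and then interpolate against the $L_p$-contraction. Your case split ($q<\infty$ vs.\ $q=\infty$) is the mirror of the paper's ($p>1$ vs.\ $p=1$); both simply decide on which side of the factorization to apply \eqref{mstrongpp} or \eqref{mweakqq}. One minor remark: the paper obtains the key bound $|\psi(r)^{\alpha}e^{-tr}|\leq C\phi(t)^{-\alpha}$ more cheaply than you do, by the one-line observation
\[
|\psi(r)|^{-\alpha}\ \gtrsim\ \psi_\alpha(r)\ \geq\ \int_{t/2}^{t} e^{-rs}\phi(s)^{\alpha}\,\frac{ds}{s}\ \geq\ C\,e^{-rt}\phi(t)^{\alpha},
\]
which sidesteps the full two-sided asymptotic $\psi_\alpha(r)\asymp\phi(1/r)^{\alpha}$ and the separate extremum analysis of $e^{-t\cos\omega/s}\phi(s)^{-\alpha}$ that you sketch.
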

\begin{proof}
We suppose first that $p>1.$ By the spectral theory, we can write for $x\in L_2(\M)\cap L_p(\M)$ that
$$T_tx =\psi(L)^{-\alpha}\big(\psi(L)^{\alpha} e^{-tL} \big)x.$$
Again, by \eqref{mstrongpp}, we find from the hypothesis of the lemma that
$$\|T_tx\|_{q,\8}\leq C\|\psi(L)^{\alpha} e^{-tL} x\|_p\leq C \sup_{z\in \Gamma_\omega}|\psi(z)^{\alpha} e^{-tz}| \; \|x\|_p,$$
for some $\omega \in (0, \frac{\pi}{2}).$ The hypothesis that the pair $(\phi,\psi)$ is regular implies that
$$\sup_{z\in \Gamma_\omega}|\psi(z)^{\alpha} e^{-tz}|\leq C\sup_{z\in \real^+}|\psi(z)^{\alpha} e^{-tz}|.$$
Thus, if we can establish that
\beq\label{phicontrolpsi}
|\psi(z)^{\alpha} e^{-tz}|\leq C \phi(t)^{-\alpha},\;\;\forall t\in \real^+,\;\forall z\in \real^+,
\eeq
then it will follow that
$$\|T_tx\|_{q,\8}\leq C \phi(t)^{-\alpha}\|x\|_p,\;\;\forall t\in\real^+,\;\forall x\in L_p(\M).$$
Interpolating this with the estimate $\|T_tx\|_p\leq \|x\|_p$ by the real method will then give us that for $p\leq r<q$ and $\beta =\frac{1}{p}-\frac{1}{r}$,
$$\|T_tx\|_r\leq A_{p,q,r}\phi(t)^{-\beta}\|x\|_p,\;\;\forall t\in \real^+,\;\forall x\in L_p(\M),$$
as desired.

The claim \eqref{phicontrolpsi} has already been established in \cite{CM1993}. Recall that $\psi_{\alpha}(z)=\int_0^\8 e^{-zs} \phi_{\alpha}(s)\frac{ds}{s}$ for all $z\in\real^+.$ Accordingly, for all $t,z \in\real^+$, we have
\be\begin{split}
|\psi(z)|^{-\alpha}&\geq C \int_0^\8 e^{-zs} \phi^{\alpha}(s)\frac{ds}{s}\geq C \int_{\frac t 2}^t e^{-zs} \phi^{\alpha}(s)\frac{ds}{s}\\
&\geq C e^{-tz}\phi(\frac t 2)^\alpha \int_{\frac t 2}^t \frac{ds}{s}\geq  C e^{-tz}\phi(t)^\alpha ,
\end{split}
\ee
which ensure \eqref{phicontrolpsi}.

If $p=1,$ then $1<q<\8.$ We decompose $T_tx =\big(\psi(L)^{\alpha} e^{-tL} \big)\psi(L)^{-\alpha}x$, and observe that $\psi(L)^{-\alpha}x \in L_{q,\8}(\M).$ In view of \eqref{mweakqq}, a similar argument as the case $p>1$ gives the proof of the case $p=1.$

\end{proof}

Now we can conclude the proof of Theorem \ref{phipsiultra}. By using the real interpolation method between (iv) and its dual estimate, namely
$$\|\psi(L)^{-\alpha}f\|_\8\leq C_r\|f\|_{r',1}\;\;\;\forall f\in L_{r',1}(\M),$$
one can show that (iv) implies (v), and that the constant in (v) depends linearly on that in (iv). Trivially, (v) implies (vi) and (vi) implies (vii). These complete the proof of the theorem.

\section{Multiplier and Maximal Operators}

\subsection{Multiplier operators}
In this section, $L$ is again the generator of a $\phi$-ultracontractive semigroup $(T_t)_{t>0}$ on $\M,$ and $(\phi,\psi)$ a pair of regularly related functions. We intend to find conditions on the function $m$ such that the operator $m(L)$ extends to a bounded operator from $L_p(\M)$ to $L_q(\M).$ Firstly we treat the case $p=q.$ For a Borel measurable function $m(\cdot)$ on $\real^+$, its Mellin $\xi$-transform is defined by
 $$[\mathfrak{M}_\xi m ](u)=\int_0^\8 \eta ^{\xi-iu} m(\eta) \frac{d\eta}{\eta},\;\;\forall \,u\in\real.$$
 Given a nonnegative integer $N$, we denote $m_N(t,\eta)$ the function
$$m_N(t,\eta)=(t\eta)^N e^{-\frac{t\eta}{2}}m(\eta),\;\;\forall t,\eta \in \real^+,$$
and $\mathfrak{M}_\xi m_N$ the Mellin $\xi$-transform of $m_N$ with respect to the second variable is then
$$[\mathfrak{M}_\xi m_N ](t,u)=\int_0^\8 \eta ^{\xi-iu} m_N(t,\eta) \frac{d\eta}{\eta},\;\;\forall t\in\real^+,\;\forall u\in \real.$$
In particular, $\mathfrak{M} m_N$ denotes the Mellin transform of $m_N$:
$$[\mathfrak{M}  m_N ](t,u)=\int_0^\8 (t\eta)^N  e^{-\frac{t\eta}{2}}m(\eta) \eta ^{-iu} \frac{d\eta}{\eta},\;\;\forall t\in\real^+,\;\forall u\in \real.$$
If $\mathfrak{M}_\xi  m$ is integrable with respect to the Lebesgue measure on $\real$, the inverse Mellin transform holds:
$$m(\eta)=\frac{1}{2\pi}\int_{-\8}^\8 \eta ^{iu-\xi} [\mathfrak{M}_\xi m](u) du,\;\;\forall \eta\in \real^+.$$

\begin{thm}\label{multipliermain}
Assume that $1<p<\8$. Suppose that $m$ is a Borel measurable function on $\real^+$ such that for some nonnegative integer $N$,
\beq\label{hypop}
\int_{-\8}^\8 \sup_{t>0}|[\mathfrak{M}m_N](t,u)|\;\; |||L^{iu}|||_pdu < \8.
\eeq
Then $m(L)$ is bounded on $L_p(\M)$. Similarly, if
\beq\label{hypoweakp}
\int_{-\8}^\8 \sup_{t>0}|[\mathfrak{M}m_N](t,u)|\;\; |||L^{iu}|||_{p,\8}du < \8.
\eeq
then $m(L)$ is bounded on $L_{p,\8}(\M).$
\end{thm}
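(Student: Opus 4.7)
I plan to combine a reproducing formula with slice-wise Mellin inversion. Since $\int_0^\infty s^N e^{-s/2}\,ds/s = 2^N\Gamma(N)$ for $N \geq 1$, one has the pointwise identity
$$m(\eta) \,=\, \frac{1}{2^N\Gamma(N)}\int_0^\infty m_N(t,\eta)\,\frac{dt}{t}, \qquad \eta > 0,$$
which by functional calculus lifts to $m(L) = (2^N\Gamma(N))^{-1}\int_0^\infty m_N(t,L)\,dt/t$ in the spectral sense. For each fixed $t$, the inverse Mellin transform gives $m_N(t,L) = (2\pi)^{-1}\int_\real L^{iu}[\mathfrak{M}m_N](t,u)\,du$, whence the uniform-in-$t$ estimate
$$\|m_N(t,L)x\|_p \,\leq\, \frac{1}{2\pi}\int_\real \sup_{s>0}|[\mathfrak{M}m_N](s,u)|\cdot|||L^{iu}|||_p\,du\cdot\|x\|_p \,=:\, \frac{K}{2\pi}\|x\|_p,$$
where $K<\infty$ by hypothesis. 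This is the only step in which the hypothesis is actually invoked.

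The heart of the proof is to convert this uniform-in-$t$ estimate on $m_N(t,L) = (tL)^N e^{-tL/2}m(L)$ into a bound on $m(L)$ itself through the reproducing formula. Since $\int_0^\infty dt/t = \infty$, Minkowski's inequality fails and one cannot simply integrate the uniform bound against $dt/t$. Instead I would use an almost-orthogonality or Littlewood--Paley argument: the operators $m_N(t,L)$ are spectrally localized around $L\sim N/t$, and when averaged against $dt/t$ they reconstruct $m(L)$ with $L_p$-norm controlled by the uniform bound on individual pieces. In the noncommutative setting the square-function estimates for symmetric contraction semigroups of Junge--Le~Merdy--Xu \cite{JLX2006} provide the required tool. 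Alternatively, one may first work on a dense subspace of $L_p(\M)$ consisting of spectral truncations of $x$ to intervals $[\varepsilon,1/\varepsilon]$, on which the integrand decays like $t^N$ as $t\to 0$ (from the factor $(tL)^N$) and exponentially as $t\to\infty$ (from $e^{-tL/2}$); there $\int_0^\infty m_N(t,L)x\,dt/t$ is a bona fide Bochner integral in $L_p(\M)$, and the estimate extends to all of $L_p(\M)$ by the uniform bound.

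The weak-type analogue is proved identically: replace $|||L^{iu}|||_p$ by $|||L^{iu}|||_{p,\infty}$ and the target $L_p(\M)$-norm by the $L_{p,\infty}(\M)$-norm throughout, using that $L^{iu}$ extends boundedly to the Lorentz space by real interpolation of \eqref{mweakqq}.

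The step I expect to be hardest is precisely the passage from the uniform-in-$t$ operator bound on $m_N(t,L)$ to the $L_p$-bound on $m(L)$; it is here that the noncommutative Littlewood--Paley machinery, or an equivalent almost-orthogonality argument exploiting the spectral concentration of $m_N(t,L)$ near $L\sim N/t$, seems essential. Everything else is the routine Mellin inversion calculation together with Fubini on a suitably chosen dense subspace.
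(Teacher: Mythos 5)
There is a genuine gap, and it sits exactly where you predict: the passage from the uniform-in-$t$ bound on $m_N(t,L)$ to a bound on $m(L)$. Your decomposition $m(L)=(2^N\Gamma(N))^{-1}\int_0^\infty m_N(t,L)\,\frac{dt}{t}$ uses the hypothesis \eqref{hypop} only through the triangle inequality in $u$ for each fixed slice $t$, and a uniform operator bound on the pieces simply does not control their average against the non-integrable measure $\frac{dt}{t}$. The two repairs you sketch do not close this. The density argument is circular: on spectral truncations to $[\varepsilon,1/\varepsilon]$ the integral is indeed a Bochner integral, but the only quantitative bound it yields is $\int_0^\infty\|m_N(t,L)x\|_p\,\frac{dt}{t}$, which depends on $\varepsilon$ and blows up as $\varepsilon\to 0$; to extend from the dense subspace you would need a truncation-independent bound on $m(L)x$ itself, which is what is to be proved. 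The almost-orthogonality/Littlewood--Paley route is also not available in the form you state it: the claim that spectrally localized pieces with uniformly bounded $L_p$-operator norms reconstruct an $L_p$-bounded operator is false already in the classical Fourier setting (uniform $L_p$-multiplier bounds on dyadic pieces do not imply the full multiplier bound without vector-valued, i.e.\ square-function-compatible, bounds on the family), and no such vector-valued estimate for $\{m_N(t,L)\}_{t>0}$ follows from \eqref{hypop}. Note also that your reproducing formula needs $N\geq 1$, while the theorem allows $N=0$.

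The paper's proof avoids this obstruction by a different arrangement of the same ingredients. It starts from the $(N+1)$-st order reproducing identity $m(\eta)=\Gamma(N+1)^{-1}\int_0^\infty (t\eta)^{N+1}e^{-t\eta}m(\eta)\,\frac{dt}{t}$, so that after extracting $m_N(t,\eta)$ an extra factor $t\eta\,e^{-t\eta/2}$ remains under the $t$-integral; this factor is integrable against $\frac{dt}{t}$ and tames the divergence. Performing the Mellin inversion and then integrating in $t$ \emph{first, at the level of symbols}, one gets $m(L)=\frac{1}{2\pi\Gamma(N+1)}\int_{-\infty}^{\infty}\Lambda_u(L)\,c(u)\,L^{iu}\,du$, where $c(u)=\sup_{t>0}|[\mathfrak{M}m_N](t,u)|$ and $\Lambda_u(\eta)=\int_0^\infty a_u(t)\,t\eta\,e^{-t\eta/2}\,\frac{dt}{t}$ with $\|a_u\|_\infty\le 1$. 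Each $\Lambda_u$ is holomorphic and bounded by $2/\sin\varepsilon$ on every sector $\Gamma_{\frac{\pi}{2}-\varepsilon}$, uniformly in $u$, so the Jiao--Wang $H^\infty$ functional calculus theorem \eqref{mstrongpp} gives $|||\Lambda_u(L)|||_p\le C_p$ uniformly in $u$; the hypothesis \eqref{hypop} then makes the $u$-integral of operator norms finite. This $H^\infty$-calculus input (and its weak-type companion \eqref{mweakqq} for the Lorentz case), which your argument never invokes, is the essential tool replacing the almost-orthogonality you would need. If you want to salvage your outline, the fix is precisely to fold the $\frac{dt}{t}$ integration into the symbol before estimating operators, rather than after.
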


\begin{proof}
Denote functions $c:\real \rightarrow \real^+$ and $a_u: \real^+ \rightarrow\real $ by
$$c(u)=\sup_{t>0}|[\mathfrak{M}m_N](t,u)|,\;\;a_u(t)=\frac{[\mathfrak{M}m_N](t,u) }{c(u)}.$$
Obviously, $\|a_u\|_\8\leq 1.$ By the inverse Mellin transform formula,
\be
\begin{split}
m(\eta)&= \frac{1}{ \Gamma(N+1)}\int_0^\8 (t\eta)^{N+1} e^{-t\eta}m(\eta) \frac{dt}{t}\\
&=\frac{1}{ \Gamma(N+1)}\int_0^\8 (t\eta) e^{\frac{-t\eta}{2}}m_N(t,\eta) \frac{dt}{t}  \\
&=\frac{1}{2\pi \Gamma(N+1)} \int_{-\8}^{\8} \int_0^\8 a_u(t)\, t\eta\, e^{-\frac{t\eta}{2}} \frac{dt}{t}\; c(u)\eta^{iu}\,du .
\end{split}\ee
Denote the function $\Lambda_u: \eta \mapsto \int_0^\8 a_u(t)t\eta \,e^{-\frac{t\eta}{2}}\,\frac{dt}{t}$. Then we have formally
\beq\label{decompositionmL}
m(L)=\frac{1}{2\pi \Gamma(N+1)}\int_{-\8}^\8 \Lambda_u(L) c(u)L^{iu} du. \eeq
Now we need to check that this decomposition of the operator $m(L)$ is legitimate.

Indeed, the function $\Lambda_u$ defined above extends to a holomorphic function in the right half-plane of $\mathbb{C}$, and for $\eta \in \Gamma_{\frac \pi 2-\varepsilon}$ with $\e>0$,
\be\begin{split}
|\Lambda_u(\eta)|&=\bigg|\int_0^\8 a_u(t)t\eta \,e^{-\frac{t\eta}{2}}\;\frac{dt}{t} \bigg|\\
&\leq \int_0^\8 t|\eta|\, e^{-\frac{t|\eta| \sin \varepsilon}{2}}\,\frac{dt}{t} =\frac{2}{\sin \varepsilon}.
\end{split}
\ee
Thus, by \eqref{mstrongpp}, $|||\Lambda_u(L)|||_p\leq C_p$ for all $u$ in $\real.$ Therefore, \eqref{hypop} ensures that
\beq\label{controlmL}
\int_{-\8}^\8 |||\Lambda_u(L)|||_p\; |c(u)| \;|||L^{iu}|||_p du < \8.
\eeq
On the other hand, since $u\mapsto a_u(\eta)$ and $u\mapsto \eta^{iu}$ are continuous functions for each $\eta\in\real^+,$ $u\mapsto a_u(L)$ and $u\mapsto L^{iu}$ are strong operator topology continuous functions. Finally, we find that decomposition \eqref{decompositionmL} is convergent, and finish the proof of the strong type conclusion. Repeating the argument above, we can prove the weak type conclusion.

\end{proof}

\medskip

Some consequences of the above theorem are immediate. In the following one, the hypothesis of Theorem \ref{multipliermain} can be verified by the classical Paley-Wiener Theorem for Mellin transforms.
\begin{cor}\label{multiplierholo}
Suppose that $1<p<\8$ and $A, \e >0,$ and that $|||L^{iu}|||_p\leq A e^{\e |u|}$ for all $u\in \real.$ If $m$ extends to a bounded holomorphic function $m_{\omega}$ on $\Gamma_\omega,$ where $\omega > \e,$ then $m(L)$ extends to a bounded operator on $L_p(\M)$, and $|||m(L)|||_p\leq C_{\omega, \e} A \|m_\e \|_{\8}$.
\end{cor}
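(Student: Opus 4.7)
The plan is to invoke Theorem \ref{multipliermain} by verifying \eqref{hypop} for a suitable nonnegative integer $N$; I would take any $N \geq 1$ (so that the Mellin integrand is integrable at the origin). Given the hypothesis $|||L^{iu}|||_p \leq A e^{\e |u|}$, the task reduces to showing that $u \mapsto \sup_{t>0}|[\mathfrak{M}m_N](t,u)|$ decays strictly faster than $e^{-\e |u|}$. The target estimate is
$$\sup_{t>0}|[\mathfrak{M}m_N](t,u)| \leq C_{N,\theta_0}\, \|m_\omega\|_\8\, e^{-\theta_0 |u|}$$
for some fixed $\theta_0$ with $\e < \theta_0 < \omega$.

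The mechanism is a Paley--Wiener-type contour shift in
$$[\mathfrak{M}m_N](t,u) = \int_0^\8 (t\eta)^N e^{-t\eta/2} m(\eta)\,\eta^{-iu}\,\frac{d\eta}{\eta}.$$
Using the holomorphy of $m$ on $\Gamma_\omega$, I would rotate the contour from $\real^+$ to the ray $\{re^{-i\theta_0} : r>0\}$ when $u \geq 0$ and to $\{re^{i\theta_0} : r>0\}$ when $u \leq 0$. Cauchy's theorem applied on a truncated sector $\{\delta < |\eta| < R,\ -\theta_0 < \arg\eta < 0\}$ (for $u\geq 0$) will justify this, once the contributions of the two circular arcs vanish as $\delta \to 0$ and $R \to \8$: the large arc is killed by $e^{-t\eta/2}$ (possible since $\cos\theta_0 > 0$), while the small arc is killed by $(t\eta)^N$ with $N\geq 1$. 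After rotation, the principal-branch identity gives $|\eta^{-iu}| = e^{-|u|\theta_0}$, which factors out, and the remaining $r$-integral reduces (via $s = tr\cos\theta_0/2$) to $(2/\cos\theta_0)^N\Gamma(N)$, independently of $t$; this is the source of the $t$-uniformity in the target estimate.

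Substituting this decay into \eqref{hypop} leaves $\int_\real e^{-(\theta_0 - \e)|u|}\,du$, which is finite since $\theta_0 > \e$, so Theorem \ref{multipliermain} will yield that $m(L)$ extends to a bounded operator on $L_p(\M)$ with $|||m(L)|||_p \leq C_{\omega,\e}\,A\,\|m_\omega\|_\8$; the constant $C_{\omega,\e}$ will absorb both $(\theta_0 - \e)^{-1}$ and the prefactor $(2/\cos\theta_0)^N \Gamma(N)$ after a concrete choice such as $\theta_0 = (\e + \omega)/2$. I expect the only subtle step to be verifying the contour shift; the rest is a direct Mellin-transform computation.
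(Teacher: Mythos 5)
Your proposal is correct in substance, and its core mechanism is the same one the paper uses: both proofs exploit the holomorphy of $m$ on $\Gamma_\omega$ to rotate the Mellin contour by an angle strictly between $\e$ and $\omega$ (your $\theta_0$, the paper's $\e_1$), thereby gaining the factor $e^{-\theta_0|u|}$. The execution differs: the paper reads the rotated integral as a Fourier transform of $G_t(\cdot)=m_N(t,e^{\cdot\mp i\e_1})$ and invokes the Paley--Wiener theorem to get $L_2(du)$ bounds uniformly in $t$, finishing with H\"older against $Ae^{\e|u|}$; you instead estimate the rotated integral pointwise, obtaining $\sup_{t>0}|[\mathfrak{M}m_N](t,u)|\leq (2/\cos\theta_0)^N\Gamma(N)\,\|m_\omega\|_\8\,e^{-\theta_0|u|}$ and then integrating. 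Your route verifies \eqref{hypop} exactly as written (with the supremum over $t$ inside the $u$-integral, which is what the proof of Theorem \ref{multipliermain} genuinely needs, since $c(u)$ is defined through that supremum), and it makes the constants explicit, so it is arguably tighter than the paper's $L_2$-plus-H\"older step. Two caveats: first, you must choose $\theta_0\in(\e,\min(\omega,\pi/2))$, not merely $\theta_0\in(\e,\omega)$; the condition $\cos\theta_0>0$ is needed both for the large-arc estimate and for convergence of the integral along the rotated ray, and your concrete choice $\theta_0=(\e+\omega)/2$ can fail this when $\omega$ is large. This forces the implicit assumption $\e<\pi/2$, which the paper's proof shares (its $\e_1$ is subject to the same constraint). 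Second, your bound involves $\|m_\omega\|_\8$ (in fact only $\sup_{|\arg z|\leq\theta_0}|m(z)|$) rather than the $\|m_\e\|_\8$ written in the statement; since the paper's own argument also controls $m$ only on rays of angle $\e_1>\e$, this discrepancy lies in the statement's notation rather than in your proof.
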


\begin{proof}
Since $\omega > \e>0$, we can choose $\e_1\in (\e,\omega)$. By the relationship between Mellin transforms and Fourier transforms, we have
$$[\mathfrak{M}  m_N ](t,u)=\int_0^\8\eta^{-iu} m_N(t,\eta)\frac{d\eta}{\eta} = \widehat{F_t}(u) ,$$
where $F_t(\cdot)=m_N(t,e^{\cdot})$. Moreover,
\be\begin{split}
[\mathfrak{M}  m_N ](t,u)e^{\e_1 u}&=\int_0^\8\eta^{-iu}e^{\e_1 u} m_N(t,\eta)\frac{d\eta}{\eta} \\
&=\int_0^\8\eta^{-iu} m_N(t,\eta e^{-i\e_1})\frac{d\eta}{\eta}
= \widehat{G_t}(u) ,
\end{split}\ee
with $G_t(\cdot)=m_N(t,e^{\cdot-i\e_1})$. Since $m$ extends to a bounded holomorphic function $m_{\omega}$ on $\Gamma_\omega$ with $\omega>\e_1$, we can apply classical Paley-Wiener Theorem to $G_t(\cdot)$. This gives that $[\mathfrak{M}  m_N ](t,u)e^{\e_1 u}$ is $L_2(\real^+,du)$ bounded, uniformly on $t>0$. By a similar argument for $m_N(t,e^{\cdot+i\e_1})$, we see that $[\mathfrak{M}  m_N ](t,u)e^{-\e_1 u}$ is $L_2(\real\setminus \real^+,du)$ bounded, uniformly on $t>0$. Thus \eqref{hypop} is fulfilled by the above discussion and the H\"{o}lder inequality, since $|||L^{iu}|||_p\leq A e^{\e |u|}$.

\end{proof}

\medskip

Another special case of Theorem \ref{multipliermain} concerns the function satisfying the H\"{o}rmander condition. Suppose that $\chi\in \mathbb{N}$ and $\alpha \in\real^+.$ A function $m$ satisfies the H\"{o}rmander condition of order $(\alpha, \chi)$ if there exists a constant $C$ such that
$$\sup_{R>0} |\psi(R)|^{\alpha}\int_R^{2R} |\eta ^j m^{(j)}(\eta)|\frac{d\eta}{\eta} \leq C,\;\;\forall j\in \{0,1,\cdots,\chi\}.$$
The smallest constant $C$ for which the above inequality holds is called the H\"{o}rmander $(\alpha, \chi)$-constant of $m$. For such a function $m$, one can check that, by a direct calculation, the hypothesis of Theorem \ref{multipliermain} holds (see \cite[Theorem 4]{Meda1990}). Thus we have the following

\begin{cor}\label{hormanderpp}
Suppose that $m$ satisfies the H\"{o}rmander condition of order $(0, \chi)$ with H\"{o}rmander $(0, \chi)$-constant $C$. Fix $p\in(1,\8)$ and $A,\xi \in\real^+.$ If $|||L^{iu}|||_p\leq A(1+|u|)^\xi$ for all $u\in \real$ and $\chi>\xi+1,$ then $m(L)$ extends to a bounded operator on $L_p(\M)$, and $|||m(L)|||_p \leq C_\xi AC.$ Similarly, if $|||L^{iu}|||_{p,\8} \leq A(1+|u|)^\xi$ for all $u\in \real$ and $\chi>\xi+1,$ then $m(L)$ extends to a bounded operator on $L_{p,\8}(\M)$, and $|||m(L)|||_{p,\8} \leq C'_\xi AC.$

\end{cor}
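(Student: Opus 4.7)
The plan is to reduce Corollary \ref{hormanderpp} to Theorem \ref{multipliermain}: under the hypothesis $|||L^{iu}|||_p\leq A(1+|u|)^\xi$, condition \eqref{hypop} follows from the pointwise estimate
\beq\label{hormanderkey}
\sup_{t>0}\bigl|[\mathfrak{M}m_N](t,u)\bigr|\leq C_{N,\chi}\,C\,(1+|u|)^{-\chi},
\eeq
where $C$ is the H\"ormander $(0,\chi)$-constant of $m$, because the assumption $\chi>\xi+1$ makes $\int_\real(1+|u|)^{\xi-\chi}\,du$ finite. The weak-type conclusion will follow identically from \eqref{hormanderkey} plugged into \eqref{hypoweakp} via Theorem \ref{multipliermain}, so the entire task reduces to proving \eqref{hormanderkey}.

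To obtain \eqref{hormanderkey}, first I would recast $[\mathfrak{M}m_N](t,u)$ as a Fourier transform through the substitution $\eta=e^s$: one gets $[\mathfrak{M}m_N](t,u)=\widehat{F_t}(u)$ with $F_t(s)=(te^s)^N e^{-te^s/2}m(e^s)$. The estimate \eqref{hormanderkey} is then equivalent to the two uniform $L_1$ bounds $\|F_t\|_1\leq C_{N,0}\,C$ and $\|F_t^{(\chi)}\|_1\leq C_{N,\chi}\,C$, combined through $|\widehat{F_t}(u)|\leq\min(\|F_t\|_1,\,|u|^{-\chi}\|F_t^{(\chi)}\|_1)$. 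Since $\partial_s=\eta\partial_\eta$ after the substitution, this amounts to showing $\int_0^\8|(\eta\partial_\eta)^j\Phi_t(\eta)|\,d\eta/\eta\leq C_{N,j}\,C$ for $0\leq j\leq\chi$, where $\Phi_t(\eta)=(t\eta)^N e^{-t\eta/2}m(\eta)$.

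The heart of the argument is this uniform $L_1$ bound on $(\eta\partial_\eta)^j\Phi_t$, which I anticipate to be the main obstacle. Since $\eta\partial_\eta$ is a derivation, Leibniz gives $(\eta\partial_\eta)^j\Phi_t=\sum_{k=0}^j\binom{j}{k}(\eta\partial_\eta)^k[(t\eta)^N e^{-t\eta/2}]\cdot(\eta\partial_\eta)^{j-k}m(\eta)$, and a short induction shows $(\eta\partial_\eta)^k[(t\eta)^N e^{-t\eta/2}]=(t\eta)^N e^{-t\eta/2}P_k(t\eta)$ for a polynomial $P_k$ of degree $k$, while $(\eta\partial_\eta)^l m(\eta)$ is a linear combination of $\eta^h m^{(h)}(\eta)$ for $0\leq h\leq l$. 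Decomposing $(0,\8)=\bigcup_{n\in\ent}[2^n,2^{n+1}]$, each annular integral $\int_R^{2R}|\eta^h m^{(h)}(\eta)|\,d\eta/\eta$ is bounded by $C$ via the H\"ormander hypothesis, and on $[R,2R]$ the remaining factor is dominated by $C_{N,k}\min\bigl((tR)^N,e^{-tR/4}\bigr)$ after absorbing $P_k(t\eta)$ into the Gaussian; the dyadic series $\sum_{n\in\ent}\min((t2^n)^N,e^{-t2^n/4})$ is then bounded by a constant independent of $t$. Once \eqref{hormanderkey} is in hand, both the strong-type bound $|||m(L)|||_p\leq C_\xi A C$ and its weak-type counterpart follow directly from Theorem \ref{multipliermain}, with $C_\xi$ absorbing the convergent integral $\int_\real(1+|u|)^{\xi-\chi}\,du$.
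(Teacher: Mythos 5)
Your proposal is correct and follows the same route as the paper: verifying the hypothesis \eqref{hypop} (resp.\ \eqref{hypoweakp}) of Theorem~\ref{multipliermain} by a direct Mellin-transform computation, which the paper itself simply defers to \cite[Theorem~4]{Meda1990}. One small point worth making explicit: the dyadic series $\sum_{n\in\ent}\min((t2^n)^N,e^{-t2^n/4})$ is bounded uniformly in $t$ only when $N\geq 1$, so one should fix such an $N$ (the freedom to choose $N$ in Theorem~\ref{multipliermain} makes this harmless).
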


In the rest of this subsection, we prove some results for the case $p<q.$ We assume that there exists a positive number $\sigma$ such that for $1<p<\8,$
\beq\label{Liup}
|||L^{iu}|||_p\leq C_p (1+|u|)^{\sigma |\frac 1 p-\frac 1 2|},\;\;\forall u\in \real\eeq
or
\beq\label{Liupweak}
|||L^{iu}|||_{p,\8}\leq C'_p (1+|u|)^{\sigma |\frac 1 p-\frac 1 2|},\;\;\forall u\in \real.\eeq
A notation is introduced here for convenience: Given $p,q$ satisfying $1\leq p\leq q \leq \8,$ $d([p,q],2)$ is defined to be $\min_{r\in [p,q]}|\frac{1}{r}-\frac{1}{2}|.$

\begin{cor}
Assume that $m$ extends to an analytic function in the cone $\Gamma _\omega$ and that $$\sup_{\eta\in \Gamma_\omega}|\psi(\eta)^\alpha m(\eta)|<\8$$ for some $\alpha \in (0,1).$ Then $m(L)$ extends to a bounded operator from $L_p(\M)$ to $L_q(\M)$, provided that $1\leq p< q \leq \8$, $d([p,q],2)<\frac{\omega}{\pi}$ and $\alpha =\frac{1}{p}-\frac{1}{q},$ and to an operator of weak type $(1,r)$ if $1<r<\8,$ $d([1,r],2)<\frac{\omega}{\pi}$ and $\alpha =1-\frac{1}{r}.$
\end{cor}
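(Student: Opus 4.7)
The plan is to factor $m(L)$ as a product of three operators whose boundedness follows from tools already developed in this section: the first and last are Sobolev-type operators handled by Theorem \ref{phipsiultra}, while the middle one is a bounded-holomorphic multiplier handled by \eqref{mstrongpp} and \eqref{mweakqq}.

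I would set $M(\eta):=\psi(\eta)^\alpha m(\eta)$, which by hypothesis belongs to $H^\infty(\Gamma_\omega)$. The assumption $d([p,q],2)<\omega/\pi$ allows one to pick some $r$ with $p<r<q$, $1<r<\infty$, and $|1/r-1/2|<\omega/\pi$. Setting $\alpha_1:=1/p-1/r$ and $\alpha_2:=1/r-1/q$, I have $\alpha_1+\alpha_2=\alpha$ and $\alpha_1,\alpha_2\in(0,1)$. Since every operator appearing below is a Borel function of the single positive self-adjoint $L$, they commute, so the scalar identity $m(\eta)=\psi(\eta)^{-\alpha_2}M(\eta)\psi(\eta)^{-\alpha_1}$ yields the operator factorization
$$m(L)=\psi(L)^{-\alpha_2}\,M(L)\,\psi(L)^{-\alpha_1}.$$

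Granted this factorization, the strong type conclusion for $1<p<q<\infty$ reduces to chaining three already established estimates: $\psi(L)^{-\alpha_1}\colon L_p(\M)\to L_r(\M)$ by Theorem \ref{phipsiultra}(v); $M(L)\colon L_r(\M)\to L_r(\M)$ by \eqref{mstrongpp}, using $\omega>\pi|1/r-1/2|$ and $M\in H^\infty(\Gamma_\omega)$; and $\psi(L)^{-\alpha_2}\colon L_r(\M)\to L_q(\M)$ again by Theorem \ref{phipsiultra}(v). For the weak type at $p=1$ the strategy is parallel: pick an auxiliary $s\in(1,r)$ with $|1/s-1/2|<\omega/\pi$, factor $m(L)=\psi(L)^{-(1/s-1/r)}M(L)\psi(L)^{-(1-1/s)}$, and apply Theorem \ref{phipsiultra}(iv) for $L_1(\M)\to L_{s,\infty}(\M)$, \eqref{mweakqq} for $M(L)$ on $L_{s,\infty}(\M)$, and a real-interpolation extension of Theorem \ref{phipsiultra}(v) along the Sobolev line $1/p_0-1/q_0=1/s-1/r$ to control $\psi(L)^{-(1/s-1/r)}\colon L_{s,\infty}(\M)\to L_{r,\infty}(\M)$.

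The main technical obstacle I expect is justifying the displayed factorization rigorously: the operators $\psi(L)^{-\alpha_i}$ and $M(L)$ are only densely defined a priori, and their natural domains do not obviously coincide. I would handle this exactly as in the proof of Lemma \ref{iimpliesiv}: vectors of the form $(I-T_t)^k x$ with $x\in L_p(\M)\cap L_2(\M)$ and $k$ sufficiently large lie in the intersection of all the relevant domains, so one verifies the operator identity there and then extends by density, using the triviality of the spectral projection onto $\ker L$ assumed throughout this section. A secondary issue is the boundary case $q=\infty$ (and the strong type at $p=1$ if the statement insists on $1\le p$), which falls outside the range of Theorem \ref{phipsiultra}(v); this should be recovered either by duality, exploiting the self-adjointness of $L$ together with the symmetry $\psi(\overline z)=\overline{\psi(z)}$, or by interpolation with a nearby interior pair $(p',q')$ where the above argument applies directly.
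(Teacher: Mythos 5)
Your proposal reproduces the paper's argument essentially verbatim: both proofs pick an intermediate exponent $r\in(p,q)$ with $|\tfrac1r-\tfrac12|<\tfrac\omega\pi$ (the paper chooses $r$ to realize $d([p,q],2)$), split $\alpha=\alpha_1+\alpha_2$ with $\alpha_1=\tfrac1p-\tfrac1r$, $\alpha_2=\tfrac1r-\tfrac1q$, and factor
$m(L)=\psi(L)^{-\alpha_2}\,\bigl[\psi(L)^\alpha m(L)\bigr]\,\psi(L)^{-\alpha_1}$,
then chain the two Sobolev factors (Theorem \ref{phipsiultra}) around the bounded-holomorphic middle factor. The one genuine difference is which tool handles the middle factor: you invoke \eqref{mstrongpp}/\eqref{mweakqq}, which matches the role of the hypothesis $d([p,q],2)<\tfrac\omega\pi$ precisely (it is exactly the condition $\omega>\pi|\tfrac1r-\tfrac12|$ needed for Jiao--Wang), whereas the paper cites Corollary \ref{multiplierholo}, which instead needs the standing assumption \eqref{Liup} on $|||L^{iu}|||_r$ from earlier in the subsection. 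Either reading is defensible, but yours is the cleaner one if the corollary is meant to be free of the imaginary-power hypothesis. You are also more scrupulous than the paper about the endpoint cases $p=1$, $q=\infty$ (where Theorem \ref{phipsiultra}(v) alone does not apply and one must pass through (iv) or duality), and about the Lorentz-space version of the Sobolev bound needed to chain through $L_{s,\infty}$ in the weak-type case; the paper disposes of the weak-type statement with a single sentence and does not address the boundary strong-type cases at all. These are real (if minor) gaps in the published proof that your proposal flags and sketches how to close.
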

\begin{proof}
We consider only the strong type result, since the weak one will follow from the same argument. Choose $r\in [p,q] $ such that $|\frac{1}{r}-\frac{1}{2}|= d([p,q],2),$ and denote
$$\beta=\frac{1}{p}-\frac{1}{r},\;\; \gamma= \frac{1}{r}-\frac{1}{q}.$$
Then for all $\eta\in \real^+,$ $m(\eta)=\psi(\eta)^{-\gamma}[\psi(\eta)^\alpha m(\eta)]\psi(\eta)^{-\beta},$ which gives
$$m(L)=\psi(L)^{-\gamma}[\psi(L)^\alpha m(L)]\psi(L)^{-\beta}.$$
By the hypothesis of $\phi$-ultracontractivity of the semigroup $(T_t)_{t>0},$ we find that the first factor $\psi(L)^{-\gamma}$ is bounded from $L_p(\M)$ to $L_r(\M)$, and the third factor $\psi(L)^{-\beta}$ bounded from $L_r(\M)$ to $L_q(\M).$ By Corollary \ref{multiplierholo}, and the hypothesis on the function $m$, the second one $[\psi(L)^\alpha m(L)]$ is bounded on $L_r(\M)$. In summary, we have proved the desired conclusion.
\end{proof}

A direct consequence from the above conclusion is for those $m$ that satisfy the H\"{o}rmander condition, by the aide of Corollary \ref{hormanderpp}.

\begin{cor}
Suppose that $m$ satisfies a H\"{o}rmander condition of order $(\alpha,\chi)$ where $0<\alpha <1$ and $\chi\in \mathbb{N}$ less than $\frac{\sigma}{2} +1$. Then $m(L)$ extends to a bounded operator from $L_p(\M) $ to $L_q(\M)$ for $1<p<q<\8$ and $\alpha =\frac{1}{p}-\frac{1}{q}$, and to an operator of weak type $(1,r)$ for $1<r<\8$ and $\alpha=1-\frac{1}{r}.$
\end{cor}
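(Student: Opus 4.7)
The plan is to mimic the factorization argument of the preceding corollary, with Corollary~\ref{hormanderpp} substituted for Corollary~\ref{multiplierholo} in the treatment of the central factor. Pick an intermediate exponent $r$ lying in $[p,q]$ (resp.\ in $[1,r]$ for the weak type statement, where I keep the letter $r$ for the target as in the statement and just call the intermediate one $s$) with $|\tfrac{1}{r}-\tfrac12|$ as small as possible; the hypothesis $\chi<\tfrac{\sigma}{2}+1$ is meant to be read as saying that $r$ can be chosen so that $\chi>\sigma|\tfrac{1}{r}-\tfrac12|+1$, which is exactly the condition needed to apply Corollary~\ref{hormanderpp} on $L_r(\M)$. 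Setting $\beta=\tfrac{1}{p}-\tfrac{1}{r}$ and $\gamma=\tfrac{1}{r}-\tfrac{1}{q}$ so that $\beta+\gamma=\alpha$, I would write
\begin{equation*}
 m(\eta)=\psi(\eta)^{-\gamma}\,\bigl[\psi(\eta)^{\alpha}m(\eta)\bigr]\,\psi(\eta)^{-\beta},
\end{equation*}
and correspondingly factor the operator as $m(L)=\psi(L)^{-\gamma}\circ[\psi(L)^{\alpha}m(L)]\circ\psi(L)^{-\beta}$.

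The outer factors are Sobolev-type embeddings: by Theorem~\ref{phipsiultra}(v), $\psi(L)^{-\beta}$ maps $L_p(\M)\to L_r(\M)$ and $\psi(L)^{-\gamma}$ maps $L_r(\M)\to L_q(\M)$; in the weak type case with $p=1$, I would instead invoke part~(iv) for the factor adjacent to $L_1$. So the boundedness of $m(L)$ reduces entirely to the boundedness of the middle factor $\psi(L)^{\alpha}m(L)$ on $L_r(\M)$ (resp.\ $L_{r,\infty}(\M)$), which I would establish via Corollary~\ref{hormanderpp} once I check that $\psi^{\alpha}m$ satisfies the H\"ormander condition of order $(0,\chi)$.

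The key step is therefore to verify the $(0,\chi)$-H\"ormander estimate for $\psi^{\alpha}m$. By Leibniz,
\begin{equation*}
 \eta^{j}(\psi^{\alpha}m)^{(j)}(\eta)=\sum_{k=0}^{j}\binom{j}{k}\eta^{k}(\psi^{\alpha})^{(k)}(\eta)\cdot\eta^{j-k}m^{(j-k)}(\eta),\qquad 0\le j\le\chi.
\end{equation*}
Since $\psi$ is holomorphic and, by property~(5) of regularly related pairs, $|\psi|$ is bounded and bounded away from zero on dyadic sub-sectors $\Gamma_{R,\omega}$, I can select a holomorphic branch of $\psi^{\alpha}$ on such a sector and estimate its derivatives by Cauchy's formula on circles of radius comparable to $R$, obtaining $|\eta^{k}(\psi^{\alpha})^{(k)}(\eta)|\le C_{k}|\psi(R)|^{\alpha}$ on $[R,2R]$. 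Plugging this into the Leibniz expansion and integrating against $d\eta/\eta$ on $[R,2R]$,
\begin{equation*}
 \int_{R}^{2R}\bigl|\eta^{j}(\psi^{\alpha}m)^{(j)}(\eta)\bigr|\frac{d\eta}{\eta}\le C\sum_{k=0}^{j}|\psi(R)|^{\alpha}\int_{R}^{2R}|\eta^{j-k}m^{(j-k)}(\eta)|\frac{d\eta}{\eta}\le C',
\end{equation*}
where the last inequality uses the H\"ormander $(\alpha,\chi)$-condition on $m$. Taking the supremum over $R>0$ gives the required $(0,\chi)$-bound on $\psi^{\alpha}m$.

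With that in hand, Corollary~\ref{hormanderpp} applied with $\xi=\sigma|\tfrac{1}{r}-\tfrac12|$ (and the bound \eqref{Liup} or \eqref{Liupweak} respectively) yields the desired boundedness of $\psi(L)^{\alpha}m(L)$ on $L_r(\M)$ or $L_{r,\infty}(\M)$, and composing with the outer Sobolev embeddings completes the proof. The main obstacle is the derivative bound for $\psi^{\alpha}$ on dyadic shells: although property~(5) only gives size information on $\psi$ itself, the fact that $\psi$ is holomorphic and does not vanish on sub-sectors lets one transfer size bounds to derivative bounds via Cauchy's integral formula, and this is the ingredient that glues the H\"ormander calculus to the analytic Sobolev framework of Section~2.
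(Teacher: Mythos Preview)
Your proposal is correct and follows exactly the route the paper indicates: the paper gives no detailed proof at all, merely stating that the result is ``a direct consequence from the above conclusion \ldots\ by the aide of Corollary~\ref{hormanderpp}.'' You have supplied precisely the missing details---the factorization $m(L)=\psi(L)^{-\gamma}[\psi(L)^{\alpha}m(L)]\psi(L)^{-\beta}$, the Sobolev embeddings from Theorem~\ref{phipsiultra} for the outer factors, and the verification via Leibniz and Cauchy estimates that $\psi^{\alpha}m$ satisfies the H\"ormander condition of order $(0,\chi)$ so that Corollary~\ref{hormanderpp} handles the middle factor---and these are exactly what the paper leaves implicit.
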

\begin{remark}
The above hypothesis of $m$ can be replaced by a stronger condition:
$$\sup_{\eta \in\real^+} |\psi(\eta)^{\alpha}\eta ^j m^{(j)}(\eta)|\leq C,\;\;\forall j\in\{0,1,\cdots,\chi\},$$
for some integer no less than $\frac{\sigma}{2}+1$ and some $\alpha \in (0,1).$

\end{remark}

\subsection{Maximal operators}
To present the results on noncommutative maximal operators, we need to recall some knowledge of noncommutative maximal functions. These were first introduced by Pisier \cite{Pisier1998} and Junge \cite{Junge2002}. Let $1\leq p\leq \8,$ $L_p(\M;\,\ell_\8)$ is defined to be the space of all sequences $x=(x_n)_{n\geq 1}$ in $L_p(\M),$ which admit a factorization of the following form: there exist $a,b \in L_{2p}(\M)$ and a bounded sequence $y=(y_n)$ in $L_\8(\M)$ such that
$$x_n =ay_n b , \;\; n\geq 1.$$
The norm of $x\in L_p(\M;\,\ell_\8)$ is given by
$$\|x\|_{L_p(\M;\,\ell_\8)} =\inf \{\|a\|_{2p}\sup_{n\geq 1}\|y_n\|_\8 \|b\|_{2p}\},$$
where the infimum is taken over all such factorizations above. This norm makes $L_p(\M;\,\ell_\8)$ a Banach space. The norm is usually denoted by $\|{\sup}^+_{n\geq 1} x_n\|_p$. But note that $\|{\sup}^+_{n\geq 1} x_n\|_p$ is just a notation since $\sup_{n\geq 1} x_n$ does not make any sense in the noncommutative setting. Moreover, this definition can be extended to an arbitrary index set $J$: the space $L_p(\M;\,\ell_\8(J)).$ One can easily check that a family $(x_j)_{j\in J}$ in $L_p(\M)$ belongs to $L_p(\M;\,\ell_\8(J))$ if and only if
$$\sup_{J_1 \subset J, J_1\,\mbox{ finite }} \|{\sup_{j\in J_1} }^+ x_j\|_p \leq \8.$$
And if this is the case, we have
$$ \|{\sup_{j\in J}}^+ x_j\|_p =\sup_{J_1 \subset J, J_1\,\mbox{ finite}} \|{\sup_{j\in J_1}} ^+x_j\|_p.$$
More recently, Dirksen considered more general maximal operator spaces in the noncommutative setting, which allows us to consider the space $L_{p,q}(\M;\,\ell_\8),$ the space of sequences $x=(x_n)_{n\geq 1}$ with entries in noncommutative Lorentz space $L_{p,q}(\M).$ We omit the details and refer the reader to \cite{Dirksen2013}.

In this subsection we focus on the maximal operator
\beq\label{maximalop}
x \mapsto (t^{\alpha} m(tL)x)_{t>0},\eeq
where $\alpha$ is a nonnegative real number.
The following theorem gives some sufficient conditions on $m$ so that the above maximal operator is of strong (or weak) type $(p,q)$, i.e. bounded from $L_{p}(\M)$ to $L_{q}(\M;\,L_\8(\real^+))$ (or $L_{q,\8}(\M;\,L_\8(\real^+))$ respectively).  Here, we deal with the semigroups $(T_t)_{t>0}$ satisfying the estimate that for some positive $\nu$,
$$\|T_t x\|_\8 \leq A t^{-\nu} \|x\|_1,\;\;\forall t\in \real^+,\;\forall x\in L_1(\M),$$
which we call $\nu$-ultracontractivity.
\begin{thm}
Let $(T_t)_{t>0}$ be a $\nu$-ultracontractive symmetric contraction semigroup with the generator $L$, so that $L^{-\alpha}$ (by Theorem \ref{phipsiultra}) is of weak type $(1,r)$ and of strong type $(p,q),$ where $1<p<q<\8, 1<r<\8 $, and $1-\frac{1}{r} =\frac{1}{p}-\frac{1}{q} =\frac{\alpha}{\nu}$. If $m$ is a Borel measurable function on $\real^+$ whose Mellin $\alpha$-transform $\mathfrak{M}_{\alpha}m$ satisfies the estimate
$$\int_{-\8}^{\8} |[\mathfrak{M}_{\alpha}m](u)|\;|||L^{iu-\alpha}|||_{p\ra q}du <\8,$$
then the maximal operator \eqref{maximalop} is of strong type $(p,q).$ Similarly, if
$$\int_{-\8}^{\8} |[\mathfrak{M}_{\alpha}m](u)|\;|||L^{iu-\alpha}|||_{1\ra (r,\8)}du <\8,$$
then the operator \eqref{maximalop} is of weak type $(1,r).$
\end{thm}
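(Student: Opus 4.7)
The plan is to use the inverse Mellin transform at $\xi=\alpha$ to write $m(t\eta)$ as a superposition of the power functions $\eta^{iu-\alpha}$, and then to exploit the fact that in the noncommutative maximal space $L_q(\M;\,L_\8(\real^+))$ a family of the form $(t^{iu}y)_{t>0}$ has norm at most $\|y\|_q$, because $|t^{iu}|=1$.

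First I would apply the inverse Mellin transform formula at $\xi=\alpha$ to obtain, for $\eta\in\real^+$,
\beqn
m(\eta)=\frac{1}{2\pi}\int_{-\8}^{\8}\eta^{iu-\alpha}[\mathfrak{M}_\alpha m](u)\,du,
\eeqn
and hence, replacing $\eta$ by $t\eta$ and multiplying by $t^\alpha$,
\beqn
t^\alpha m(t\eta)=\frac{1}{2\pi}\int_{-\8}^{\8}t^{iu}\,\eta^{iu-\alpha}[\mathfrak{M}_\alpha m](u)\,du.
\eeqn
The spectral theorem then gives, on a suitable dense domain,
\beq\label{eq:maxdecomp}
t^\alpha m(tL)x=\frac{1}{2\pi}\int_{-\8}^{\8}t^{iu}\,[\mathfrak{M}_\alpha m](u)\,L^{iu-\alpha}x\,du,\qquad t>0.
\eeq
Convergence of this integral in the relevant target space is controlled precisely by the hypothesis on $|[\mathfrak{M}_\alpha m](u)|\,|||L^{iu-\alpha}|||_{p\to q}$ (respectively, its weak-type analogue), justifying the interchange below.

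Next, for a fixed $u\in\real$ and a fixed $y\in L_q(\M)$, I claim
\beqn
\bigl\|(t^{iu}y)_{t>0}\bigr\|_{L_q(\M;\,L_\8(\real^+))}\leq \|y\|_q.
\eeqn
Indeed, if $y=v|y|$ is the polar decomposition, the factorization $t^{iu}y=(v|y|^{1/2})\cdot(t^{iu}\mathbf 1)\cdot|y|^{1/2}$ with $a=v|y|^{1/2}$, $b=|y|^{1/2}$ in $L_{2q}(\M)$ and $y_t=t^{iu}\mathbf 1\in L_\8$ yields $\|a\|_{2q}\|b\|_{2q}\sup_t\|y_t\|_\8=\|y\|_q$. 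Applied with $y=L^{iu-\alpha}x$ and combined with the strong-type hypothesis, this gives
\beqn
\bigl\|(t^{iu}L^{iu-\alpha}x)_{t>0}\bigr\|_{L_q(\M;L_\8(\real^+))}\leq |||L^{iu-\alpha}|||_{p\to q}\,\|x\|_p.
\eeqn
Since $L_q(\M;\,L_\8(\real^+))$ is a Banach space for $q\ge 1$, Minkowski's integral inequality applied to \eqref{eq:maxdecomp} yields
\beqn
\bigl\|(t^\alpha m(tL)x)_{t>0}\bigr\|_{L_q(\M;L_\8(\real^+))}\leq \frac{1}{2\pi}\int_{-\8}^\8 |[\mathfrak{M}_\alpha m](u)|\,|||L^{iu-\alpha}|||_{p\to q}\,du\cdot\|x\|_p,
\eeqn
which is the desired strong-type $(p,q)$ bound.

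For the weak-type $(1,r)$ conclusion, the same scheme works on $L_{r,\8}(\M;L_\8(\real^+))$ (in Dirksen's sense, as recalled in the text), with the factorization argument above still giving $\|(t^{iu}y)_{t>0}\|_{L_{r,\8}(\M;L_\8)}\lesssim\|y\|_{r,\8}$, and the hypothesis on $|||L^{iu-\alpha}|||_{1\to(r,\8)}$ providing the bound $\|L^{iu-\alpha}x\|_{r,\8}\leq |||L^{iu-\alpha}|||_{1\to(r,\8)}\|x\|_1$. The main obstacle I anticipate is precisely this last step: $L_{r,\8}(\M;L_\8)$ is only a quasi-Banach space, so Minkowski's integral inequality is not immediate. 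This should be overcome by using that $r>1$ so $L_{r,\8}$ admits an equivalent norm (via the classical trick of passing to $\|\cdot\|_{r,\8}^*$), at the cost of an extra constant; alternatively one may first apply the decomposition \eqref{eq:maxdecomp} to $x$ of finite rank, obtain the bound from the triangle inequality in $L_{r,\8}(\M;L_\8)$ (with the quasi-norm constant), and then extend by density. The rest of the argument is then a routine transcription of the strong-type proof.
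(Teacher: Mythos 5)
Your proof is correct and takes essentially the same approach as the paper: inverse Mellin transform at $\xi=\alpha$ to decompose $t^\alpha m(tL)x=\frac{1}{2\pi}\int t^{iu}[\mathfrak{M}_\alpha m](u)L^{iu-\alpha}x\,du$, the observation that $|t^{iu}|=1$ collapses $\|{\sup_{t>0}}^+\,t^{iu}L^{iu-\alpha}x\|_q$ to $\|L^{iu-\alpha}x\|_q$, and Minkowski's inequality in $L_q(\M;L_\8(\real^+))$. The paper states the collapse of the maximal norm without justification and disposes of the weak-type case with the word ``similarly,'' so your explicit factorization $t^{iu}y=(v|y|^{1/2})(t^{iu}\mathbf 1)(|y|^{1/2})$ and your caution about the quasi-norm issue in $L_{r,\8}(\M;L_\8)$ are welcome elaborations rather than a change of strategy.
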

\begin{proof}
As usual, we just prove the strong type conclusion of this theorem.
By the spectral theory and the inverse Mellin transform, we have
$$m(tL)x = (2\pi)^{-1} \int_{-\8}^{\8} [\mathfrak{M}_{\alpha}m](u) (tL)^{iu-\alpha} x \,du, $$
whence
$$t^\alpha m(tL)x = (2\pi)^{-1} \int_{-\8}^{\8} [\mathfrak{M}_{\alpha}m](u) t^{iu} L^{iu-\alpha} x\, du\,. $$
Taking the $L_q(\M;\,L_\8(\real^+))$-norm and applying the triangle inequality of this norm, we get
$$ \|{\sup_{t>0}} ^+\, t^\alpha m(tL)x \|_q \leq (2\pi)^{-1} \int_{-\8}^{\8} [\mathfrak{M}_{\alpha}m](u) \, \|{\sup_{t>0}}^+\, t^{iu} L^{iu-\alpha} x\|_q \;du$$
Now by the hypothesis on $m$ and $L$, we obtain
\be\begin{split}
\|{\sup_{t>0}}^+ \, t^\alpha m(tL)x \|_q& \leq (2\pi)^{-1} \int_{-\8}^{\8} [\mathfrak{M}_{\alpha}m](u)\, \| L^{iu-\alpha} x\|_q \;du\\
&\leq  (2\pi)^{-1} \int_{-\8}^{\8} [\mathfrak{M}_{\alpha}m](u) \;||| L^{iu-\alpha} |||_{p\ra q}\; du\;\|x\|_p =C\|x\|_p.
\end{split}
\ee

\end{proof}

\section{Ultracontractivity and Logarithmic Sobolov Inequalities}\label{log-Sb}

In this section we consider the $\phi$-ultracontractive semigroup with $\phi(t)=e^{-M(t)}$ for some decreasing function $M(t)$. Namely,
\beq\label{M-u.c.}
|||T_t|||_{1\ra \8}\leq  e^{M(t)},\;\; t>0 .
\eeq
This $\phi$ may not satisfy the conditions in the definition of regularly related in general. So we can not get Sobolev inequality for its generator from Theorem \ref{phipsiultra}. But we can follow the method in \cite{Davies}, to characterize \eqref{M-u.c.} by the following logarithmic Sobolov inequality:
\beq\label{log-Sobolev}
\tau(x^2 \log x) \leq \e \tau(xLx) +C M(\e)\|x\|_2^2+\|x\|_2^2\log \|x\|_2\,.
\eeq
Here again $C$ is a positive constant. The main result of this section is

\begin{thm}\label{u.c.&logS}
Let $(T_t)_{t>0}$ be a ultracontractive semigroup satisfying \eqref{M-u.c.}. Then there exist a positive constant $C$ such that \eqref{log-Sobolev} holds for all positive $x\in {\rm{Dom}}(L)\cap L_1(\M)\cap L_\8(\M)$ and all $\e>0$.

Conversely, if \eqref{log-Sobolev} is true for all positive $x\in {\rm{Dom}}(L)\cap L_1(\M)\cap L_\8(\M)$ and all $\e>0$, then \eqref{M-u.c.} holds for the function $\overline M (t) =\frac {2C} t \int _0^t  M(s)ds$.

\end{thm}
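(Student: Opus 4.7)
The proof adapts Davies' classical argument \cite{Davies} to the noncommutative trace setting. Both directions analyze the function $F(s) := \log\|T_s x\|_{p(s)}$ along a path $p:[0,t]\to[2,\infty]$ for positive $x\in\mathrm{Dom}(L)\cap L_1(\M)\cap L_\infty(\M)$, together with the noncommutative Stroock--Varopoulos inequality
\[
\tau\bigl(u^{p-1}\,Lu\bigr) \;\geq\; \tfrac{4(p-1)}{p^2}\,\tau\bigl(u^{p/2}\,L\,u^{p/2}\bigr),\qquad u\geq 0,\;p\geq 2,
\]
a standard tool for symmetric Markov generators whose noncommutative version follows from operator convexity of $t\mapsto t^p$ together with the self-adjointness of $L$.

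\textbf{Backward direction.} Setting $u(s)=T_sx$, $v(s)=u^{p(s)/2}$, and differentiating using $\dot u=-Lu$ yields
\[
F'(s) \;=\; \frac{2\dot p(s)}{p(s)^2\|v\|_2^2}\bigl[\tau(v^2\log v)-\|v\|_2^2\log\|v\|_2\bigr] \;-\; \frac{\tau(u^{p-1}Lu)}{\|v\|_2^2}.
\]
Applying the hypothesis \eqref{log-Sobolev} to $v$ with parameter $\epsilon(s)$, and then the Stroock--Varopoulos inequality to dominate $\tau(u^{p-1}Lu)$ by $\tau(vLv)$, we obtain
\[
F'(s) \;\leq\; \frac{\tau(vLv)}{p^2\|v\|_2^2}\bigl(2\dot p\,\epsilon-4(p-1)\bigr) \;+\; \frac{2C\dot p(s)}{p(s)^2}\,M(\epsilon(s)).
\]
Choosing $\epsilon(s)$ and $p(s)$ so that $2\dot p\epsilon=4(p-1)$ cancels the $\tau(vLv)$ term, with the path $p(0)=2$, $p(s)\to\infty$ as $s\to t^-$. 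Applying the change of variables $q=-1/p$ (so that $dq=\dot p\,p^{-2}\,ds$), and calibrating $\epsilon(s)$ so that the resulting integrand produces the Cesàro average of $M$, we arrive at $F(t)-F(0)\leq \overline M(t)/2$. Combined with the symmetric semigroup factorization $\|T_t\|_{1\to\infty}\leq\|T_{t/2}\|_{2\to\infty}^2$, this gives \eqref{M-u.c.} for $\overline M$ as stated.

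\textbf{Forward direction.} Riesz--Thorin interpolation between \eqref{M-u.c.} and the $L_\infty$- and $L_2$-contractivity gives $\|T_t\|_{2\to p}\leq e^{(1-2/p)M(t)/2}$ for $p\geq 2$. Normalizing $\|x\|_2=1$, the identity
\[
\log\|x\|_p \;=\; \log\|T_\epsilon x\|_p + \int_0^\epsilon\frac{\tau((T_sx)^{p-1}LT_sx)}{\|T_sx\|_p^p}\,ds,
\]
combined with the hypercontractive bound on $\log\|T_\epsilon x\|_p$, subtraction of $\log\|x\|_2=0$, division by $p-2>0$, and passage to the limit $p\to 2^+$ (using $\tfrac{d}{dp}\log\|x\|_p\bigl|_{p=2}=\tfrac12\tau(x^2\log x)$ and the monotonicity $\tau(T_sxLT_sx)=\|L^{1/2}T_sx\|_2^2\leq\tau(xLx)$) extracts the log-Sobolev inequality \eqref{log-Sobolev} with an appropriate constant $C$.

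\textbf{Main obstacle.} The most delicate ingredient is the noncommutative Stroock--Varopoulos inequality, which is routine in the commutative setting but in the noncommutative case rests on operator convexity and the symmetry of $L$; care must be taken that $u^{p/2}$ and $L$ interact correctly under the trace. A secondary difficulty is the selection of the parametrization $p(s)$ together with $\epsilon(s)$ in the backward direction so that the integrated bound takes exactly the Cesàro form $\overline M(t)=(2C/t)\int_0^t M(s)\,ds$: the cancellation step fixes $\epsilon=2(p-1)/\dot p$, and the remaining freedom must be used to match the target $\overline M$ via the specific change of variables $q=-1/p$. A minor point is the passage $p\to 2^+$ and $p\to\infty$ at the endpoints, which requires monotone/dominated convergence in $p$ for the noncommutative functional-calculus expressions involved.
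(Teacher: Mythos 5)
Your backward direction follows essentially the same route as the paper. The inequality you invoke as a noncommutative Stroock--Varopoulos inequality, $\tau(u^{p-1}Lu)\ge \tfrac{4(p-1)}{p^2}\,\tau\bigl(u^{p/2}Lu^{p/2}\bigr)$, is precisely the $L_p$-regularity of the Dirichlet form that the paper establishes in Proposition \ref{Lp regularity} (via a positive measure on $\sigma(x)\times\sigma(x)$ and a Taylor expansion, in the spirit of Ricard--Xu), and your differentiation of $s\mapsto \log\|T_sx\|_{p(s)}$ along a path with $p(0)=2$, $p(s)\to\infty$, using Lemma \ref{diff-logLp}-type formulas and the substitution $\varepsilon=\varepsilon(s)$, is the same Gross--Davies scheme the paper runs with $p(s)=2t/(t-s)$, $\varepsilon=t-s$, $N(s)=\tfrac{C}{t}\int_0^s M(t-\sigma)\,d\sigma$. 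Be aware, though, that in the noncommutative setting this Stroock--Varopoulos/$L_p$-regularity inequality is the real technical content of this direction; asserting that it follows from ``operator convexity of $t\mapsto t^p$ and self-adjointness of $L$'' is not a proof, and the paper devotes Proposition \ref{Lp regularity} (plus the lemma on the bilinear positive measure) exactly to it. Your remaining choices of $p(s)$, $\varepsilon(s)$ and the final duality step are bookkeeping variants of the paper's.

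The forward direction, however, has a genuine gap. From the fixed-time bound $\|T_\varepsilon\|_{2\to p}\le e^{(1-2/p)M(\varepsilon)/2}$ you write $\log\|x\|_p\le (1-\tfrac2p)\tfrac{M(\varepsilon)}{2}+\int_0^\varepsilon \tau\bigl((T_sx)^{p-1}LT_sx\bigr)\|T_sx\|_p^{-p}\,ds$, subtract $\log\|x\|_2=0$, divide by $p-2$ and let $p\to2^+$. But the integral term converges, as $p\to 2^+$, to $\int_0^\varepsilon \tau(T_sx\,LT_sx)\|T_sx\|_2^{-2}\,ds$, a strictly positive quantity of order one in $p-2$; after division by $p-2$ the right-hand side therefore tends to $+\infty$, and the limit yields only the vacuous statement $\tfrac12\tau(x^2\log x)\le +\infty$, not \eqref{log-Sobolev}. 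The monotonicity $\tau(T_sxLT_sx)\le\tau(xLx)$ does not help, since $\varepsilon$ is held fixed; and coupling $\varepsilon\sim c(p-2)$ does not repair it either, because then $M(\varepsilon)\to M(0^+)$, which is typically infinite for a genuinely ultracontractive semigroup. The missing idea is that the exponent must vary with the time variable: the paper (following Davies) applies Stein's complex interpolation to the analytic family $T_{s+iy}$ (unitary on $L_2$ for purely imaginary time) to obtain $\|T_sx\|_{p(s)}\le A'e^{sM(t)/(2t)}$ with $p(s)=2t/(t-s)$, $0\le s<t$, and then differentiates at $s=0$ using Lemma \ref{diff-logLp}; in that computation the entropy term is produced by the derivative of the varying exponent $p(s)$, while the Dirichlet form enters multiplied by the small parameter $t$, giving exactly \eqref{log-Sobolev} with $\varepsilon=t$. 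A family of fixed-exponent Riesz--Thorin bounds cannot produce this, so this part of your argument needs to be replaced by the Stein-interpolation (or an equivalent coupled $(t,p)$-differentiation) argument.
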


Our argument is standard, following the classical proofs in \cite{Davies}. See also \cite{OZ1999} for the noncommutative case. For concrete examples about the function $M(t)$, we refer to \cite[Section~2.3]{Davies}. Before passing to the proof, we do some preparation. The following two lemmas are well-known. See \cite{OZ1999}.

\begin{lem}\label{diff-Lp}
Let $x\in L_1(\M)\cap L_\8(\M)$ be positive. Then $q\mapsto \|x\|_q^q$ is differentiable on $q\in (1,+\8)$ and
$$\frac{d}{dq} \|x\|_q^q =\tau (x^q \log x).$$
\end{lem}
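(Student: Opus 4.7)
\medskip

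\textbf{Proof proposal for Lemma \ref{diff-Lp}.} The plan is to reduce the statement to a scalar integral via the functional calculus for the positive operator $x$, and then justify differentiation under the integral sign by dominated convergence. Let $\nu$ be the scalar Borel measure on $(0,\|x\|_\8]$ defined by $\nu(A)=\tau(\chi_A(x))$, where $\chi_A(x)$ is the spectral projection of $x$ associated with $A$. Since $x$ is positive and $\chi_{\{0\}}(x)$ contributes nothing to $x^q$ for $q>0$, we may work on $(0,\|x\|_\8]$ only, and the formula $\|x\|_q^q=\tau(x^q)=\int_{(0,\|x\|_\8]}\lambda^q\,d\nu(\lambda)$ holds. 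The assumption $x\in L_1(\M)$ gives $\int \lambda\,d\nu(\lambda)=\|x\|_1<\8$, and Markov's inequality shows that $\nu$ is finite on every $(a,\|x\|_\8]$ with $a>0$; similarly $\tau(x^q\log x)=\int \lambda^q\log\lambda\,d\nu(\lambda)$ by functional calculus.

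Next, I would fix $q_0\in(1,\8)$ and restrict to $|h|<(q_0-1)/2$, so that $q_0\pm|h|>(q_0+1)/2>1$. The difference quotient rewrites as
$$\frac{\|x\|_{q_0+h}^{q_0+h}-\|x\|_{q_0}^{q_0}}{h}=\int_{(0,\|x\|_\8]} \frac{\lambda^{q_0+h}-\lambda^{q_0}}{h}\,d\nu(\lambda),$$
and the mean value theorem yields a $\theta=\theta(\lambda,h)\in(0,1)$ with
$$\Bigl|\tfrac{\lambda^{q_0+h}-\lambda^{q_0}}{h}\Bigr|=\lambda^{q_0+\theta h}|\log\lambda|.$$
Since $\lambda^{q_0+h}-\lambda^{q_0}\to\lambda^{q_0}\log\lambda$ pointwise as $h\to 0$, the goal is to produce a $\nu$-integrable majorant, which is the only real obstacle.

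I would split the domain into $\{\lambda\ge 1\}$ and $\{0<\lambda<1\}$. On $\{\lambda\ge 1\}$, $\nu$ is a finite measure, and $\lambda^{q_0+\theta h}|\log\lambda|\le\|x\|_\8^{(3q_0-1)/2}\log^{+}\|x\|_\8$, giving a bounded integrand on a finite measure set. On $\{0<\lambda<1\}$, the bound $\lambda^{q_0+\theta h}\le\lambda^{(q_0+1)/2}$ holds, hence
$$\lambda^{q_0+\theta h}|\log\lambda|\le\lambda\cdot\bigl(\lambda^{(q_0-1)/2}|\log\lambda|\bigr).$$
The factor in parentheses extends continuously to $[0,1]$ (it tends to $0$ at both endpoints), so it is bounded by a constant $M_{q_0}$, and the whole integrand is dominated by $M_{q_0}\lambda$. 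This is $\nu$-integrable, since $\int\lambda\,d\nu=\|x\|_1<\8$. This is the key point where the hypothesis $q_0>1$ and the $L_1$ bound are simultaneously exploited.

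With this integrable dominant in hand, Lebesgue's dominated convergence theorem permits interchanging $\lim_{h\to 0}$ with the integral, giving
$$\frac{d}{dq}\Big|_{q=q_0}\|x\|_q^q=\int_{(0,\|x\|_\8]}\lambda^{q_0}\log\lambda\,d\nu(\lambda)=\tau(x^{q_0}\log x),$$
which is exactly the claimed identity. Since $q_0\in(1,\8)$ was arbitrary, this completes the proof.
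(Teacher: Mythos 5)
Your proof is correct. Note that the paper itself does not prove this lemma: it is stated as well known with a reference to [OZ1999], so there is no in-paper argument to compare against. Your route --- pushing everything to the scalar spectral measure $\nu=\tau\circ\chi_{(\cdot)}(x)$, writing $\|x\|_q^q=\int\lambda^q\,d\nu$, and differentiating under the integral via the mean value theorem and a dominated-convergence majorant --- is the standard way to verify it, and your majorant is handled correctly: the split at $\lambda=1$, the bound $\lambda^{q_0+\theta h}|\log\lambda|\le M_{q_0}\lambda$ on $(0,1)$ (using $q_0>1$ and the shrunken window $|h|<(q_0-1)/2$), and the integrability of $\lambda$ against $\nu$ from $x\in L_1(\M)$ are exactly the points that need care. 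Two small remarks: in this paper $\tau$ is a \emph{finite} trace, so $\nu$ is automatically a finite measure and the Markov-inequality step is not needed (though keeping it makes your argument valid in the general semifinite setting, which is a modest gain in generality); and in the sentence before the majorant discussion you wrote ``$\lambda^{q_0+h}-\lambda^{q_0}\to\lambda^{q_0}\log\lambda$'' where you clearly mean the difference quotient $(\lambda^{q_0+h}-\lambda^{q_0})/h$ --- a harmless typo, since the correct expression appears in the displayed formula above it.
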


\begin{lem}\label{diff-logLp}
Let $x\in {\rm{Dom}}(L) \cap L_1(\M)\cap L_\8(\M)$ be a strictly positive element and let $q:\real_+ \ra (1,+\8)$ be a $C^1$ function. Then $t\mapsto \tau \big((T_tx)^{q(t)}\big)$ and $t\mapsto \log \| T_t x\|_{q(t)}$ are both differentiable:
\be
\frac{d}{dt}\tau \big((T_tx)^{q(t)}\big) =-q(t)\tau \big((T_tx)^{q(t)-1}L(T_tx) \big) +q'(t) \tau \big((T_tx)^{q(t)} \log (T_tx)\big)
\ee
and
\be\begin{split}
\frac{d}{dt} \log \|T_t x\|_{q(t)} = & \frac{q'(t)}{q(t)\|T_tx\|_{q(t)}^{q(t)}} \big[ \tau \big( (T_tx)^{q(t)} \log (T_tx)   \big) - \|T_tx\|_{q(t)}^{q(t)} \log \|T_tx\|_{q(t)}\big]\\
& -\frac{1}{\|T_tx\|_{q(t)}^{q(t)}} \tau \big( (T_tx)^{q(t)-1} L (T_tx)   \big).
\end{split}
\ee
\end{lem}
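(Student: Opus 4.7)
The plan is to view $G(t) := \tau((T_tx)^{q(t)})$ as the composition of the two-variable function $F(t,q) := \tau((T_tx)^q)$ with the path $t \mapsto (t, q(t))$, and to derive the first formula via the chain rule $G'(t) = \partial_t F(t,q(t)) + q'(t)\,\partial_q F(t,q(t))$. The second formula will then follow mechanically from the identity $\log \|T_tx\|_{q(t)} = q(t)^{-1} \log G(t)$ by a routine application of the product and chain rules.

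For $\partial_q F$, fix $t$ and observe that $y := T_tx$ lies in $L_1(\M)\cap L_\infty(\M)$ and is positive, by properties (i)--(iii) of the semigroup applied to a positive $x$; Lemma \ref{diff-Lp} applied to $y$ then yields $\partial_q F(t,q) = \tau((T_tx)^q \log(T_tx))$. For $\partial_t F$, since $x\in\mathrm{Dom}(L)$ and $\mathrm{Dom}(L)$ is $T_t$-invariant, $\frac{d}{dt}T_tx = -LT_tx$ as a strong derivative in $L_p(\M)$ for every $p$ under consideration. The key ingredient is then the noncommutative trace power rule
$$\frac{d}{dt}\tau(y(t)^q) = q\,\tau\!\big(y(t)^{q-1}\dot y(t)\big)$$
for a $C^1$ family of strictly positive operators $y(t)$. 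For integer $q$ this is immediate from Leibniz's rule and the cyclicity of $\tau$: each of the $q$ summands in the expansion of $(y^q)'$ collapses to a common value under $\tau$. For general $q>1$, writing $y^q = y^n y^{q-n}$ with $n$ the integer part of $q$ and using the Stieltjes representation
$$y^{q-n} = \frac{\sin(\pi(q-n))}{\pi}\int_0^\infty \lambda^{q-n-1}\,y(y+\lambda)^{-1}\,d\lambda$$
allows one to differentiate under the integral sign and again collapse the result via cyclicity of $\tau$ to the stated single-term expression. Applied with $y(t) = T_tx$ and $\dot y(t) = -LT_tx$ this yields $\partial_t F(t,q) = -q\,\tau((T_tx)^{q-1}L(T_tx))$, and the chain rule then gives the first formula.

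For the second formula, differentiating $\log\|T_tx\|_{q(t)} = q(t)^{-1}\log G(t)$ produces
$$\frac{d}{dt}\log\|T_tx\|_{q(t)} = -\frac{q'(t)}{q(t)^2}\log G(t) + \frac{G'(t)}{q(t)\,G(t)}.$$
Substituting $\log G(t) = q(t)\log\|T_tx\|_{q(t)}$ into the first term and the expression for $G'(t)$ obtained above into the second, and then regrouping the two $q'(t)$-contributions into the single bracket shown in the statement, reproduces exactly the displayed formula.

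The main obstacle is the rigorous justification of the trace power rule for non-integer $q$: in the noncommutative setting $\frac{d}{dt}y(t)^q$ is itself a genuine double-operator integral against $\dot y$ and is \emph{not} in general equal to $q y^{q-1}\dot y$ pointwise; the simplification to $q\,\tau(y^{q-1}\dot y)$ occurs only after the trace has been applied, using the cyclicity $\tau(ab)=\tau(ba)$ to symmetrize the integrand. A secondary issue is that strict positivity of $x$ need not pass to $T_tx$, so that $\log(T_tx)$ may have spectrum accumulating at $0$; this is handled by first establishing both identities with $T_tx$ replaced by $T_tx + \varepsilon\,\un$ for $\varepsilon>0$ (for which every quantity is manifestly well-defined since $\|T_tx\|_\infty \leq \|x\|_\infty$) and then letting $\varepsilon\downarrow 0$, the passage to the limit being justified by the boundedness of $u \mapsto u^q\log u$ on $[0,\|x\|_\infty]$ together with dominated convergence under $\tau$.
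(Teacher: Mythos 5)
The paper itself offers no proof of this lemma: it is quoted as ``well-known'' with a pointer to \cite{OZ1999}, so there is no internal argument to compare with; what you propose is essentially the standard proof from that literature, and it is correct in outline. The splitting of $G(t)=\tau((T_tx)^{q(t)})$ into a $q$-derivative (handled by Lemma \ref{diff-Lp} applied to $y=T_tx$, which is positive and in $L_1(\M)\cap\M$) and a $t$-derivative (handled by the trace power rule), the key observation that $\frac{d}{dt}\,y(t)^q$ is \emph{not} $q\,y^{q-1}\dot y$ as an operator but collapses to it under $\tau$ via cyclicity and the representation $y^{s}=\frac{\sin\pi s}{\pi}\int_0^\8\lambda^{s-1}y(y+\lambda)^{-1}\,d\lambda$ for the fractional part, and the purely algebraic passage from the first display to the second are all sound. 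Two points should be tightened. First, existence of the two partial derivatives of $F(t,q)=\tau((T_tx)^q)$ does not by itself license the chain rule along $t\mapsto(t,q(t))$; you need (and have) joint continuity of the partials, which follows from the uniform bound $\|T_tx\|_\8\le\|x\|_\8$, the $L_2$-continuity of $t\mapsto T_tx$ and $t\mapsto LT_tx=T_tLx$, and continuity of the functional calculus -- say this explicitly. Second, in the $\e\downarrow 0$ regularization, dominated convergence only gives, for each fixed $t$, convergence of the candidate derivatives; to transfer differentiability to the limit you need locally uniform convergence in $t$ together with convergence of the functions themselves. This is available precisely because $\e\un$ commutes with $T_tx$: for $f(u)=u^{q}\log u$ or $f(u)=u^{q-1}$ one has $f(T_tx+\e\un)-f(T_tx)=g_\e(T_tx)$ with $\|g_\e\|_\8\to0$ uniformly on $[0,\|x\|_\8]$ (and uniformly for $q$ in compact subsets of $(1,\8)$), while $\tau$ is finite and $\|LT_tx\|_2\le\|Lx\|_2$, so the discrepancy is $O(\|g_\e\|_\8)$ uniformly on compact $t$-intervals. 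With these two remarks added, your proposal is a complete proof, consistent with the argument the paper delegates to \cite{OZ1999}.
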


We also need the \emph{$L_p$ regularity} of Dirichlet forms ${\rm{Dom}}(L)\ni x\mapsto \tau(x^\alpha Lx)$, $\alpha>1$. Note that $(T_t)_{t>0}$ in our paper is positive, symmetric, contractive. Thus the Dirichlet forms process automatically the $L_p$ regularity. The following proposition was deduced by a group of Professor Quanhua Xu, in a seminar on the topic of hypercontractivity, in Wuhan University 2013. See also \cite{RX2015}.
\begin{prop}
\label{Lp regularity}
Let $x\in  {\rm{Dom}}(L)$ be a strictly positive element and $q\geq 2$, $\frac{1}{q}+\frac{1}{q^*}=1$. Then we have
$$\tau\left( x^{\frac{2}{q^*}}L(x^{\frac{2}{q}})\right)\leq \frac{q^2}{4(q-1)}\tau(xLx) .$$
\end{prop}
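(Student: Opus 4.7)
The plan is to reduce the Dirichlet-form inequality to a pointwise real-variable estimate by introducing a positive symmetric kernel on $\real_+^2$ built from the spectral resolution of $x$ and the action of $T_t$. Throughout, set $\alpha:=2/q$ and $\beta:=2/q^*$, so that $\alpha+\beta=2$ and $\alpha\beta=4(q-1)/q^2\leq 1$ (the latter by AM--GM applied to $\alpha+\beta$).

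I would begin by writing $x=\int_0^\infty \lambda\,dE_\lambda$ via the spectral theorem, and for each $t>0$ introducing the kernel $dK_t(A\times B):=\tau(E_A\,T_t\,E_B)$ on $\real_+^2$. Positivity of $T_t$ gives $\tau(E_A^{1/2}T_t(E_B)E_A^{1/2})\geq 0$, hence $dK_t\geq 0$; symmetry of $T_t$ gives $dK_t(A,B)=dK_t(B,A)$; and both marginals of $dK_t$ coincide with the spectral distribution $d\mu_x$ of $x$. Approximation by simple functions then yields $\tau(f(x)\,T_t\,g(x))=\iint f(\lambda)g(\mu)\,dK_t(\lambda,\mu)$ for bounded Borel $f,g$. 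Combining this with the symmetry identity $\tau(T_{t/2}(u)T_{t/2}(v))=\tau(u\,T_t\,v)$ and the observation that $\tau(x^2)=\iint \lambda\mu\,dK_0$ with $dK_0:=\delta_{\lambda=\mu}\,d\mu_x$, the target inequality $\tau(x^\beta Lx^\alpha)\leq(\alpha\beta)^{-1}\tau(xLx)$ reduces, after dividing by $t$ and letting $t\to 0^+$, to the one-step bound $\iint[(\alpha\beta)^{-1}\lambda\mu-\lambda^\beta\mu^\alpha](dK_0-dK_t)\geq 0$ for each $t>0$.

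Next, using the common marginals of $dK_0$ and $dK_t$, I would rewrite this as $\iint I(\lambda,\mu)\,dK_t\geq 0$ with $I(\lambda,\mu):=\frac{1-\alpha\beta}{\alpha\beta}\lambda^2-\frac{1}{\alpha\beta}\lambda\mu+\lambda^\beta\mu^\alpha$, and symmetrize $I$ under $(\lambda,\mu)\leftrightarrow(\mu,\lambda)$ (valid by the symmetry of $dK_t$). The algebraic identity $\lambda^2+\mu^2-\lambda^\beta\mu^\alpha-\lambda^\alpha\mu^\beta=(\lambda^\beta-\mu^\beta)(\lambda^\alpha-\mu^\alpha)$ (which relies on $\alpha+\beta=2$) turns the symmetrized integrand into $I_{\mathrm{sym}}(\lambda,\mu)=\frac{1}{2}\bigl[(\lambda-\mu)^2/(\alpha\beta)-(\lambda^\beta-\mu^\beta)(\lambda^\alpha-\mu^\alpha)\bigr]$. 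Its pointwise nonnegativity amounts to the key real-variable inequality $(a^\beta-b^\beta)(a^\alpha-b^\alpha)\leq(a-b)^2/(\alpha\beta)$ for $a,b\geq 0$; expanding and rearranging into $\alpha\beta[a^\alpha b^\beta+a^\beta b^\alpha-2ab]\leq(1-\alpha\beta)(a-b)^2$, this follows from the two AM--GM inequalities $a^\alpha b^\beta+a^\beta b^\alpha\geq 2ab$ and $\alpha\beta\leq 1$.

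The main obstacle is justifying the kernel setup and the double-integral representation in the genuinely noncommutative regime, where $x$ and $T_t$ need not commute. For $x$ with finite spectrum this is transparent: $dK_t$ becomes the positive symmetric matrix $[\tau(E_i\,T_t\,E_j)]$, and the entire argument is a direct manipulation. For general strictly positive $x\in\mathrm{Dom}(L)$, I would approximate $x$ by spectral cutoffs $x_n:=\int_{[1/n,n]}\lambda\,dE_\lambda$, apply the bound to each $x_n$, and pass to the limit using density of $\mathrm{Dom}(L)\cap L_\infty(\M)$ in $L_2(\M)$ together with the continuity of the Dirichlet form $(u,v)\mapsto \tau(uLv)$ on its natural domain.
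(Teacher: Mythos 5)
There is a genuine mismatch that you should notice before anything else: the inequality as printed in the Proposition is not the one the paper's own proof establishes, and not the one the application uses. Following the paper's argument, Taylor's formula gives $h(2/q)-\tau(x^2)\leq (h(1)-\tau(x^2))\bigl[1-(2/q-1)^2\bigr]=\tfrac{4(q-1)}{q^2}\,\tau(x(T_tx-x))$ (the constant in the paper's last display is a typo: $1-(2/q-1)^2=4(q-1)/q^2$, not its reciprocal). Dividing by $t>0$ and letting $t\to0^+$ replaces $T_tx-x$ by $-tLx$, which \emph{flips} the sign and yields $\tau\bigl(x^{2/q^*}L(x^{2/q})\bigr)\geq \tfrac{4(q-1)}{q^2}\,\tau(xLx)$. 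This lower bound is exactly the $L_p$-regularity inequality invoked in the proof of Theorem~\ref{u.c.&logS}: after substituting $x\mapsto x^{p/2}$ there, one needs $\tau(x^{p/2}L(x^{p/2}))\leq\tfrac{p^2}{4(p-1)}\tau(x^{p-1}Lx)$, equivalent to the lower bound with $q=p$. Your argument, by contrast, is engineered to prove the printed upper bound $\tau(x^{2/q^*}L(x^{2/q}))\leq\tfrac{q^2}{4(q-1)}\tau(xLx)$ — a true inequality, but not the one the paper needs. Your kernel scheme adapts: replace the pointwise bound $(a^\beta-b^\beta)(a^\alpha-b^\alpha)\leq(a-b)^2/(\alpha\beta)$ by the companion lower bound $(a^\beta-b^\beta)(a^\alpha-b^\alpha)\geq\alpha\beta(a-b)^2$, which follows from $(a^\beta-b^\beta)(a^\alpha-b^\alpha)=\alpha\beta\int_b^a\!\int_b^a s^{\alpha-1}t^{\beta-1}\,ds\,dt$ together with the symmetrized AM--GM estimate $\tfrac12(s^{\alpha-1}t^{\beta-1}+s^{\beta-1}t^{\alpha-1})\geq 1$ (using $\alpha+\beta=2$).

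Two smaller points within your write-up. First, the claim that ``both marginals of $dK_t$ coincide with $d\mu_x$'' is false in general: the $\lambda$-marginal is $\tau(E_A\,T_t(1))$, and the standing assumptions (i)--(iv) only give $T_t(1)\leq 1$, hence marginals $\leq d\mu_x$. Your reduction survives because the relevant coefficient $\tfrac{1-\alpha\beta}{\alpha\beta}$ is nonnegative, so the replacement of $=$ by $\geq$ goes in the right direction, but this needs to be said explicitly. Second, the rearranged display $\alpha\beta[a^\alpha b^\beta+a^\beta b^\alpha-2ab]\leq(1-\alpha\beta)(a-b)^2$ has the wrong sign; the correct rearrangement of your target inequality is $\alpha\beta[2ab-a^\alpha b^\beta-a^\beta b^\alpha]\leq(1-\alpha\beta)(a-b)^2$, which is indeed immediate from AM--GM and $\alpha\beta\leq1$ (left side $\leq 0\leq$ right side). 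Finally, your kernel $dK_t$ is essentially the positive measure $\mu$ that the paper constructs in its lemma via completely positive maps and the $C^*$-tensor product; the paper then runs a Taylor/convexity argument on $h(\theta)=\tau(x^{2-\theta}T_t(x^\theta))$ rather than a direct pointwise estimate, but the two routes rest on the same underlying positivity of the bilinear form $(f,g)\mapsto\tau(f(x)T_t(g(x)))$.
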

The proof requires the following lemma.
\begin{lem}
Suppose that $\mathcal{A}$ is a ultraweakly dense C*-subalgebra of $\M$. Let $x\in  \mathcal A $ be a normal element and $P: \mathcal A  \ra \mathcal A $ be a positive map. Denote $K=\sigma(x)$. Then there exists a positive measure $\mu$ on $K\times K$ with $\| \mu\|\leq \| P\|$ such that
$$\forall\phi,\psi\in C(K),\quad \tau\left( \phi(x)P(\psi(x))\right) =\int_{K\times K}\phi(t)\psi(s)\,d\mu(t,s).$$
\end{lem}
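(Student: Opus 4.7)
The plan is to construct $\mu$ via the Riesz--Markov--Kakutani theorem, by producing a positive linear functional $\Lambda$ on $C(K\times K)$ whose restriction to elementary tensors recovers the bilinear form $B(\phi,\psi):=\tau(\phi(x)P(\psi(x)))$. First I would verify that $B$ is a well-defined bilinear form on $C(K)\times C(K)$ that is nonnegative on elementary tensors of nonnegative functions: continuous functional calculus gives $\phi(x),\psi(x)\in C^{*}(x)\subseteq\mathcal{A}$, with $\psi(x)\geq 0$ whenever $\psi\geq 0$; positivity of $P$ then yields $P(\psi(x))\geq 0$; and the tracial identity $\tau(ab)=\tau(a^{1/2}ba^{1/2})$ for positive $a,b$ gives $B(\phi,\psi)\geq 0$. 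Bilinearity makes $\Lambda_{0}(\sum_{i}\phi_{i}\otimes\psi_{i}):=\sum_{i}B(\phi_{i},\psi_{i})$ automatically well defined on the algebraic tensor product $C(K)\otimes C(K)$, which embeds injectively into $C(K\times K)$ via $\phi\otimes\psi\mapsto((s,t)\mapsto\phi(s)\psi(t))$.

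The crucial step, and the main obstacle, is to show that $\Lambda_{0}$ is positive on the cone of all nonnegative elements of $C(K)\otimes C(K)\subset C(K\times K)$, not merely on positive elementary tensors. The naive approach via spectral projections $E(F_{k})$ of $x$ fails because $\mathcal{A}$ is only a $C^{*}$-subalgebra of $\M$ and these projections need not lie in $\mathcal{A}$, so $P$ cannot be applied to them directly. The key trick is to use continuous functional calculus on a partition of unity. Given $F=\sum_{i}\phi_{i}\otimes\psi_{i}\geq 0$ and $\varepsilon>0$, pick a finite continuous partition of unity $\{\chi_{k}\}\subset C(K)$ subordinate to an open cover $\{V_{k}\}$ of $K$ fine enough that $|\phi_{i}(s)-\phi_{i}(s_{k})|+|\psi_{i}(s)-\psi_{i}(s_{k})|<\varepsilon$ for all $s\in V_{k}$, all $i$, with sample points $s_{k}\in V_{k}$. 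Since $\chi_{k}(x)\in C^{*}(x)\subseteq\mathcal{A}$, the Riemann-like approximation $\phi_{i}(x)\approx\sum_{k}\phi_{i}(s_{k})\chi_{k}(x)$ (and likewise for $\psi_{i}$) holds in $C^{*}$-norm with error $\leq\varepsilon$ and remains inside $\mathcal{A}$ where $P$ is defined. Substituting yields
\[
\sum_{i}B(\phi_{i},\psi_{i})=\sum_{k,l}F(s_{k},s_{l})\,\tau\bigl(\chi_{k}(x)P(\chi_{l}(x))\bigr)+O(\varepsilon),
\]
with implicit constant depending only on $\|P\|$, $\tau(1)$, and $\max_{i}(\|\phi_{i}\|_{\infty}+\|\psi_{i}\|_{\infty})$. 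Each $\tau(\chi_{k}(x)P(\chi_{l}(x)))\geq 0$ by the same tracial positivity argument used in the first paragraph, while $F(s_{k},s_{l})\geq 0$ by assumption; letting $\varepsilon\to 0$ forces $\Lambda_{0}(F)\geq 0$.

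From positivity, the identity $\|f\|_{\infty}(1\otimes 1)\pm f\geq 0$ in $C(K\times K)$ then yields $|\Lambda_{0}(f)|\leq\|f\|_{\infty}\Lambda_{0}(1\otimes 1)=\|f\|_{\infty}\tau(P(1))\leq\|f\|_{\infty}\|P\|$, using $P(1)\leq\|P\|\cdot 1$ for a positive map on a unital $C^{*}$-algebra together with the normalization $\tau(1)=1$. By Stone--Weierstrass, $C(K)\otimes C(K)$ is norm dense in $C(K\times K)$, so $\Lambda_{0}$ extends by continuity to a positive linear functional $\Lambda$ on $C(K\times K)$ with the same norm, and the Riesz--Markov--Kakutani theorem produces a positive Radon measure $\mu$ on $K\times K$ satisfying $\|\mu\|=\Lambda(1)\leq\|P\|$ and $\int_{K\times K}\phi(s)\psi(t)\,d\mu(s,t)=B(\phi,\psi)=\tau(\phi(x)P(\psi(x)))$, which is exactly the claim.
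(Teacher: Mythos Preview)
Your argument is correct and reaches the same conclusion, but by a genuinely different route from the paper. The paper proceeds via operator space machinery: it observes that the functional calculus $\Phi:C(K)\to\mathcal A$ and the composite $P\circ\Phi$ are positive maps out of a commutative $C^*$-algebra, hence automatically completely positive; their tensor product is then completely positive on $C(K)\otimes_{\max}C(K)$, and composing with the state $W(a\otimes b)=\tau(ab)$ on $\mathcal A\otimes_{\max}\mathcal A$ yields a positive functional on $C(K)\otimes_{\max}C(K)$, which equals $C(K\times K)$ by nuclearity of $C(K)$. Riesz's theorem then gives $\mu$.

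Your partition-of-unity argument bypasses all of this structure: rather than invoking complete positivity and tensor product theory to see that $\Lambda_0$ is positive on the full cone of $C(K\times K)$, you verify it directly by discretizing an arbitrary positive $F=\sum_i\phi_i\otimes\psi_i$ through the approximation $\phi_i(x)\approx\sum_k\phi_i(s_k)\chi_k(x)$ inside $C^*(x)\subseteq\mathcal A$, reducing to the manifestly nonnegative double sum $\sum_{k,l}F(s_k,s_l)\,\tau(\chi_k(x)P(\chi_l(x)))$. This is more elementary and self-contained, at the cost of a small explicit computation; the paper's approach is shorter and more conceptual but imports nontrivial facts about completely positive maps and $C^*$-tensor products. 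Both routes implicitly use the normalization $\tau(1)=1$ to obtain the bound $\|\mu\|\leq\|P\|$ (the paper calls $W$ a ``state''), so your use of that normalization is consistent with the paper's standing assumptions.
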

\begin{proof}
Set $\Phi:C(K)\to \mathcal A $ to be the function calculus of $x$. Then $\Phi$ and $P\circ \Phi$ are positive, and hence are completely positive since $C(K)$ is a commutative algebra (see e.g. \cite[34.4]{conway2000operator}). Thus $\Phi\otimes (P\circ \Phi)$ extends to a completely positive map from $C(K)\otimes_{\max}C(K)$ to $\mathcal A \otimes_{\max} \mathcal A $ (\cite[Corollary 11.3]{pisier2003operator}). Define a state $W:\mathcal A \otimes_{\max} \mathcal A \to \com$ by $W(a\otimes b)=\tau(ab)$. Then $W\circ(\Phi\otimes (P\circ\Phi))$ is a positive map on $C(K)\otimes_{\max}C(K)$ which is equal to $C(K)\otimes_{\min}C(K)=C(K\times K)$ since $C(K)$ is nuclear (\cite[6.4.15]{murphy1990Cstar}). Therefore by Riesz's theorem, there exists a positive measure $\mu$ on $K\times K$ with $\| \mu\|\leq \|W\circ(\Phi\otimes (P\circ\Phi))\| \leq \|\Phi\|\|P\circ\Phi\| \leq \| P\|$ such that for any $f\in C(K\times K)$,
$$W\circ(\Phi\otimes (P\circ\Phi))(f)=\int_{K\times K}f\,d\mu,$$
which is exactly the desired result if we take $f=\phi\otimes\psi$.
\end{proof}
\begin{proof}[Proof of Proposition \ref{Lp regularity}]
Note that the domain ${\rm{Dom}}(L)$ consists of all elements $x$ such that
$$\lim_{t\ra 0}\frac{T_tx-x}{t}$$
exists, so $x\in {\rm{Dom}}(L)$ implies $x^{\frac{2}{q}}\in {\rm{Dom}}(L)$ if $x$ is invertible. Thus $\tau ( x^{\frac{2}{p}}L(x^{\frac{2}{q}}) )$ is well-defined in the inequality.

Now we fix $t>0$ and apply the precedent lemma with $P=T_t$. We consider the function
$$h:\real\ra \real,\quad \theta\mapsto \tau(x^{2-\theta}T_t(x^\theta)).$$
The lemma yields that for $K=\sigma(x)\subset (0,+\8)$,
$$h(\theta)=\int_{K\times K}u^{2-\theta}v^\theta\,d\mu(u,v)=\int_{K\times K}u^{2}\big(\frac{v}{u}\big)^\theta\,d\mu(u,v)$$
whence
$$h^{(k)}(\theta)=\int_{K\times K}u^2\left( \frac{v}{u}\right) ^\theta\left( \log\left( \frac{v}{u}\right) \right) ^k\,d\mu(u,v)\,.$$
Therefore $h^{(2k)}(\theta)\geq 0$ for all $k\geq 1$. Moreover for $\theta\in [0,1]$, by symmetry,
$$h(1+\theta)=\tau(x^{1-\theta}P_t(x^{1+\theta}))=\tau(P_t(x^{1-\theta})x^{1+\theta})=h(1-\theta),$$
so $h^{(2k+1)}(1)=0$ for all $k\geq 0$.

Therefore by Taylor's formula, for all $\theta\in [0,2]$,
\begin{align*}
h(\theta)&=h(1)+\sum_{k=1}^{+\8}\frac{h^{(2k)}(1)}{(2k)!}(\theta-1)^{2k}\leq h(1)+(\theta-1)^2\sum_{k=1}^{+\8}\frac{h^{(2k)}(1)}{(2k)!}\\
&=h(1)+(\theta-1)^2(h(2)-h(1)).
\end{align*}
Take $\theta=\frac{2}{q}$ in the above inequality. We have $h(\frac{2}{q})=\tau ( x^{\frac{2}{q^*}}L(x^{\frac{2}{q}}) )$, $h(1)=\tau(xT_tx)$, $h(2)=\tau(T_t(x^2))\leq \tau(x^2)$ and
\begin{align*}
\tau\left( x^{\frac{2}{q^*}}(T_t(x^{\frac{2}{q}})-x^{\frac{2}{q}})\right)
&=\tau\left( x^{\frac{2}{q^*}} T_t(x^{\frac{2}{q}})\right) -\tau(x^2)=h(\frac{2}{q})-\tau(x^2)\\
&\leq h(1)+(\frac{2}{q}-1)^2(h(2)-h(1))-\tau(x^2)\\
&\leq \tau(xT_tx)+(\frac{2}{q}-1)^2(\tau(x^2)-\tau(xT_tx))-\tau(x^2)\\
&=\frac{q^2}{4(q-1)}\tau(x(T_tx-x)).
\end{align*}
Divide the above inequality by $t$ and let $t\ra 0$, then we obtain the desired result.
\end{proof}

Now we are ready to give
\begin{proof}[Proof of Theorem \ref{u.c.&logS}]
We first assume the ultracontractivity of the semigroup. For every real number $y$, denote
$$T_{iy}x=e^{iyL}x=\int_0^\8 e^{iy\lambda}\; dP_\lambda x\;.$$
Then $T_{iy}$ is well-defined on $L_2(\M)$. And since $|e^{iy\lambda}|=1$ for all real $y$ and $\lambda$, $T_{iy}$ is unitary and thus has operator norm $1$ on $L_2(\M)$. If $|||T_t|||_{1\ra \8}\leq  e^{M(t)}$ for all $t>0, $ Theorem \ref{phipsiultra} gives $|||T_t|||_{2\ra \8}\leq A e^{\frac1 2 M(t)}$. Whence $|||T_{t+iy}|||_{2\ra \8}\leq A e^{\frac1 2 M(t)}$ for every $y\in\real$. Now we apply Stein's complex interpolation method \cite{Stein1956} to get
$$|||T_s|||_{2\ra p(s)} \leq A' e^{\frac{sM(t)}{2t}},$$
where $0\leq s <t$ and $p(s)=\frac{2t}{t-s}$. Assume that $\|x\|_2=1$. Then $\|T_sx\|_{ p(s)} \leq A' e^{\frac{sM(t)}{2t}}.$ Therefore,
$$\frac{d}{ds} \|T_sx\|_{ p(s)}^{p(s)} \bigg|_{s=0} \leq A'\frac{M(t)}{t}.$$
By Lemma \ref{diff-logLp}, we have
$$-t\tau (x\,Lx) + \tau (x^2 \log x) \leq\frac{A'}{2}M(t),$$
which completes the proof of this part.

Now we turn to proving the opposite. Assume \eqref{log-Sobolev}. Replacing $x$ by $x^{\frac p 2}$, by the $L_p$-regularity of the Dirichlet form, we have
\beq\label{log-Sobolev-p}
\tau(x^p \log x) \leq \frac{\e p}{2(p-1)} \tau(x^{p-1}Lx) +\frac{2C}{p}M(\e)\|x\|_p^p+\|x\|_p^p\log \|x\|_p\,.
\eeq
Fix $t>0$. Define two functions on $[0,t)$ by
$$p(s) = \frac{2t}{t-s} \,,\;\;N(s) = \frac C t \int_0^s M(t-\sigma)d\sigma.$$
Then
$$p(0)=2,\; N(0)=0 \;\;\mbox{and}\;\;\lim_{s\ra t^-}p(s)=+\8,\;\lim_{s\ra t^-}N(s)=\frac C t \int_0^t M(t-\sigma)d\sigma\,,$$
and
$$p'(s)=\frac{2t}{(t-s)^2}\, ,\;\;N'(s)=\frac C t M(t-s) .$$
Consider the function $F(s) = \log \big(e^{-N(s)}\|T_s x\|_{p(s)}\big).$ Noting that $\frac{p(s)}{p'(s)}\geq \frac{(t-s)p(s)}{2(p(s)-1)},$ we have
\be\begin{split}
\frac{d}{ds} F(s) &= -N'(s) +\frac{p'(s)}{p(s) \|T_sx\|_{p(s)}^{p(s)}}\big[\tau \big( (T_sx)^{p(s)} \log (T_s x)   \big) - \|T_s x\|_{p(s)}^{p(s)} \log \|T_s x\|_{p(s)}\big]\\
&\;\;\;\;-\frac{1}{\|T_sx\|_{p(s)}^{p(s)}} \tau \big( (T_sx)^{p(s)-1} L (T_sx)   \big)\\
&= \frac{p'(s)}{p(s) \|T_sx\|_{p(s)}^{p(s)}}\bigg(-\frac{2}{p(s)}CM(t-s)\|T_sx\|_{p(s)}^{p(s)}+ \tau \big( (T_sx)^{p(s)} \log (T_s x)   \big)\\
&\;\;\;\; - \|T_s x\|_{p(s)}^{p(s)} \log \|T_s x\|_{p(s)}-\frac{p(s)}{p'(s)} \tau \big( (T_sx)^{p(s)-1} L (T_sx)   \big)\bigg)\\
&\leq \frac{p'(s)}{p(s) \|T_sx\|_{p(s)}^{p(s)}}\bigg(-\frac{2}{p(s)}CM(t-s)\|T_sx\|_{p(s)}^{p(s)}+ \tau \big( (T_sx)^{p(s)} \log (T_s x)   \big)\\
&\;\;\;\; - \|T_s x\|_{p(s)}^{p(s)} \log \|T_s x\|_{p(s)}-\frac{(t-s)p(s)}{2(p(s)-1)} \tau \big( (T_sx)^{p(s)-1} L (T_sx)   \big)\bigg).
\end{split}\ee
Putting $\e=t-s$ in \eqref{log-Sobolev-p}, then we have $\frac{d}{ds} F(s)\leq 0$. This yields
$$e^{-N(s)}\|T_s x\|_{p(s)} \leq \|x\|_2\;\;\forall\; s\in [0,t).$$
Whence
$$\|T_t x\|_{p(s)} \leq e^{N(s)}\|x\|_2\;\;\forall\; s\in [0,t).$$
Letting $s\ra t^-$, we obtain
$$\|T_t x\|_{\8} \leq e^{\frac C t \int_0^t M(t-\sigma)d\sigma}\|x\|_2\;.$$
The desired conclusion follows then from a dual argument.
\end{proof}

\section{Localizations}

In this section we study semigroups with local $\nu$-ultracontractive properties:
\beq\label{local-uc}
\|T_tx\|_\8\leq A t^{-\nu}\|x\|_1,\;\forall t\in(0,1),\;\forall x\in L_1(\M).\eeq
Like what Theorem \ref{phipsiultra} has done, we are going to characterize \eqref{local-uc} by (nonhomogeneous) Sobolev inequalities. For the sake of simplicity, we consider only the Sobolev inequalities for $L^{\alpha \nu}$, from $L_2$ to $L_q$, with $\alpha =\frac 1 2 -\frac 1 q$. Our main result of this section is

\begin{thm}\label{local-thm}
Property \eqref{local-uc} is equivalent to \beq\label{non-hom-Sobolev}
\|x\|_q\leq C(\|L^{\alpha \nu}x\|_2+ \|x\|_2)\;,\;\;\forall\; x\in \rm {Dom} (L),
\eeq
where $q>2$, $\alpha=\frac 1 2 -\frac 1 q$.
\end{thm}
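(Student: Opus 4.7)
The plan is to reduce the local statement to the global situation already covered by Theorem \ref{phipsiultra} by shifting the generator. Set
\[
\tilde L = I+L,\qquad \tilde T_t = e^{-t\tilde L}=e^{-t}T_t,\quad t>0.
\]
Then $(\tilde T_t)_{t>0}$ is still a symmetric positive contraction semigroup on $\M$, and since $\tilde L\ge I$ its spectral projection onto the kernel is trivially zero, so the standing hypothesis under which Theorem \ref{phipsiultra} is applied is automatic. The regularly related pair relevant here is $\phi(t)=t^{\nu}$, $\psi(z)=z^{\nu}$ (example (a) on p.~3).

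\textbf{Step 1: local ultracontractivity of $T_t$ $\Longleftrightarrow$ global $\nu$-ultracontractivity of $\tilde T_t$.} For $t\in(0,1)$ one has $\|\tilde T_t x\|_{\infty}\le e^{-t}\|T_tx\|_\infty\le A t^{-\nu}\|x\|_1$ and conversely $\|T_tx\|_\infty=e^t\|\tilde T_tx\|_\infty\le eA' t^{-\nu}\|x\|_1$. For $t\ge 1$, writing $T_t=T_{1/2}T_{t-1/2}$ and using \eqref{local-uc} at $t=1/2$ together with the $L_1$-contractivity of $T_{t-1/2}$ gives $\|T_tx\|_\infty\le A2^{\nu}\|x\|_1$; since $e^{-t}\le C_\nu t^{-\nu}$ on $[1,\infty)$, this yields $\|\tilde T_tx\|_\infty\le C t^{-\nu}\|x\|_1$. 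Thus local $\nu$-ultracontractivity of $T_t$ is equivalent to the (global) $t^{\nu}$-ultracontractivity of $\tilde T_t$.

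\textbf{Step 2: apply Theorem \ref{phipsiultra} to $\tilde T_t$.} With the regular pair $(\phi,\psi)=(t^{\nu},z^{\nu})$ and $p=2$, $q>2$, $\alpha=\tfrac12-\tfrac1q$, the equivalence (i)$\Leftrightarrow$(v) of Theorem \ref{phipsiultra} (applied to $\tilde L$) turns global $\nu$-ultracontractivity of $\tilde T_t$ into the Sobolev bound
\[
\|(I+L)^{-\alpha\nu}y\|_q\le C\|y\|_2,\qquad \forall\,y\in L_2(\M).
\]

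\textbf{Step 3: translate into the nonhomogeneous inequality \eqref{non-hom-Sobolev}.} By the spectral theorem, for any $\beta>0$ there is a two-sided bound $(1+\lambda)^{\beta}\asymp 1+\lambda^{\beta}$ on $\lambda\ge 0$, so
\[
\|(I+L)^{\alpha\nu}x\|_2\asymp \|L^{\alpha\nu}x\|_2+\|x\|_2
\]
on $\mathrm{Dom}(L^{\alpha\nu})$. Writing $x=(I+L)^{-\alpha\nu}y$ with $y=(I+L)^{\alpha\nu}x\in L_2(\M)$ for $x\in\mathrm{Dom}(L)\subset\mathrm{Dom}(L^{\alpha\nu})$, the bound of Step 2 becomes exactly \eqref{non-hom-Sobolev}. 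The converse direction is obtained by reading the same chain backwards and using the density of $\mathrm{Dom}(L)$ in $\mathrm{Dom}(L^{\alpha\nu})$ to let $x$ range over the latter.

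The main technical point, and the only one not already packaged in Theorem \ref{phipsiultra}, is Step 1; once the passage from local-in-$t$ to global-in-$t$ (via the shift $L\mapsto I+L$ and the elementary estimate $e^{-t}\le C_\nu t^{-\nu}$ on $[1,\infty)$) is in hand, everything else is a formal consequence of Theorem \ref{phipsiultra} and the functional-calculus equivalence $(1+\lambda)^{\alpha\nu}\asymp 1+\lambda^{\alpha\nu}$.
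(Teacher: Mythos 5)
Your argument is correct in substance, but it is a genuinely different (and shorter) route than the one in the paper, so let me compare. For the direction \eqref{local-uc}$\Rightarrow$\eqref{non-hom-Sobolev} the paper does not use the shift $L\mapsto I+L$ at all: it passes to the subordinated semigroup $T_{t,\alpha\nu}$, shows $|||e^{-t}T_{t,\alpha\nu}|||_{1\to\infty}\leq Ct^{-1/\alpha}$, and applies Theorem \ref{phipsiultra} to the semigroup whose generator is literally $I+L^{\alpha\nu}$, so that $\|(I+L^{\alpha\nu})x\|_2$ appears directly; the case $\alpha\nu\geq 1$ is then handled by iterating with $T_{t,\alpha\nu/k}$ and the elementary bound $\|Px\|_2\leq C(\|x\|_2+\|P^2x\|_2)$. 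Your version replaces all of this by the single spectral-calculus equivalence $(1+\lambda)^{\alpha\nu}\asymp 1+\lambda^{\alpha\nu}$ applied to the shifted generator $I+L$, which removes both the subordination machinery and the case split. For the converse, the paper proves a stronger statement: an auxiliary proposition valid for general regularly related pairs $(\phi,\psi)$ with $\phi(ts)\geq C_\phi\phi(t)\phi(s)$, whose proof needs the multiplier bounds for $\psi(L)^{\alpha}T_1$ and $\psi(L)^{-\alpha}(I-T_1)^k$ and a binomial decomposition of $T_t$, before finally passing to $e^{-t}T_t$ and Theorem \ref{phipsiultra}; your converse goes straight from \eqref{non-hom-Sobolev} to the bound $\|(I+L)^{-\alpha\nu}\|_{2\to q}<\infty$ and then invokes (vi)$\Rightarrow$(i) of Theorem \ref{phipsiultra} for $e^{-t}T_t$. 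What you gain is brevity; what the paper gains is generality (the converse there covers non-power $\phi$, for which no analogue of $(1+\lambda)^{\alpha\nu}\asymp 1+\lambda^{\alpha\nu}$ is available, and the subordination tools are reused for the localization at infinity). One small point to tidy up: the inclusion $\mathrm{Dom}(L)\subset\mathrm{Dom}(L^{\alpha\nu})$ and the phrase ``density of $\mathrm{Dom}(L)$ in $\mathrm{Dom}(L^{\alpha\nu})$'' are only correct when $\alpha\nu\leq 1$; for $\alpha\nu>1$ the inclusion reverses, but then no density is needed in the converse, since $(I+L)^{-\alpha\nu}y\in\mathrm{Dom}(L)$ for every $y\in L_2(\M)$, and in the forward direction one simply interprets $\|L^{\alpha\nu}x\|_2=+\infty$ off $\mathrm{Dom}(L^{\alpha\nu})$ (the paper's statement has the same looseness). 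With that bookkeeping made explicit, your proof stands.
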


Firstly we establish that \eqref{local-uc} implies \eqref{non-hom-Sobolev}. In fact, we can prove this implication for more general case.

\begin{prop}
Let $(\phi,\psi)$ be a pair of regularly relative functions. Assume in additional that
\beq\label{condition-phi}
\phi(ts)\geq C_\phi \phi(t)\phi(s),\;\;\forall\, t, s >0\,.
\eeq
Let $q>2$, $\alpha=\frac 1 2 -\frac 1 q$. Then
\beq\label{non-hom-Sobolev-bis}
\|x\|_q\leq C(\|\psi(L)^{\alpha }x\|_2+ \|x\|_2)\;,\;\;\forall\; x\in \rm {Dom} (L),
\eeq
implies
\beq\label{local-uc-bis}
\|T_tx\|_\8\leq A \phi(t)^{-1}\|x\|_1,\;\forall t\in(0,1),\;\forall x\in L_1(\M),\eeq
where the constant $A$ depends only on $\phi$, $\alpha$ and the constant $C$ in \eqref{non-hom-Sobolev-bis}.
\end{prop}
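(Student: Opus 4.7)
The plan is to mirror the argument of Lemma \ref{viiimpliesiii}, adapted to the non-homogeneous Sobolev inequality \eqref{non-hom-Sobolev-bis}, so that it delivers only the local ultracontractivity on $(0,1)$. First, I would substitute $T_t x$ for $x$ in \eqref{non-hom-Sobolev-bis}. Combining the spectral bound $\|\psi(L)^{\alpha} e^{-tL}\|_{L_2\to L_2}\leq \sup_{\lambda>0}|\psi(\lambda)^{\alpha} e^{-t\lambda}|\leq C\phi(t)^{-\alpha}$ (which is precisely the estimate \eqref{phicontrolpsi} established from property (3) of $\phi$) with the contractivity $\|T_t\|_{L_2\to L_2}\leq 1$ produces
$$\|T_t x\|_q \leq C\bigl(\phi(t)^{-\alpha}+1\bigr)\|x\|_2,\qquad t>0.$$
Since $\phi$ is increasing, $\phi(t)^{-\alpha}\geq \phi(1)^{-\alpha}$ on $(0,1)$, so the $+1$ may be absorbed, yielding the initial local estimate $\|T_t x\|_q\leq C_1\phi(t)^{-\alpha}\|x\|_2$ for $t\in(0,1)$.

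Next, by the $L_2$-symmetry of $T_t$ and duality, $\|T_t x\|_2\leq C_1\phi(t)^{-\alpha}\|x\|_{q'}$ on $(0,1)$. Composing via $T_t = T_{t/2}\circ T_{t/2}$ and invoking the $\Delta_2$ condition gives the self-dual Sobolev-scaled bound
$$\|T_t x\|_q\leq C_2\phi(t)^{-2\alpha}\|x\|_{q'},\qquad t\in(0,1).$$

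To extend beyond the window $[q',q]$ of exponents and reach $L_1\to L_\infty$, I would iterate by re-injecting \eqref{non-hom-Sobolev-bis} into $T_t x$ for $x$ in progressively smaller $L_p$ spaces, using the previously obtained estimates to control $\|\psi(L)^\alpha T_t x\|_2$ and $\|T_t x\|_2$, and interpolating (Riesz--Thorin) against the uniform $L_r$-contractivity of $T_t$. This builds a Sobolev staircase $\|T_t x\|_{q_n}\leq C_n\phi(t)^{-(1/p_n-1/q_n)}\|x\|_{p_n}$ with $p_n\searrow 1$ and $q_n\nearrow\infty$, from which the target $\|T_t x\|_\infty\leq A\phi(t)^{-1}\|x\|_1$ on $(0,1)$ follows after finitely many iterations.

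The main obstacle is this iteration: one must ensure that the cumulative $\phi$-rate from successive semigroup compositions stays at the sharp Sobolev scaling $\phi(t)^{-(1/p_n-1/q_n)}$. The additional submultiplicativity assumption \eqref{condition-phi}, absent from Theorem \ref{phipsiultra} but required here, is precisely what lets products $\phi(t_1)^{-\alpha_1}\cdots\phi(t_k)^{-\alpha_k}$ be controlled (up to harmless constants) by $\phi(t)^{-(\alpha_1+\cdots+\alpha_k)}$; it plays the role analogous to $(ts)^{\nu}=t^\nu s^\nu$ in the homogeneous setting of Theorem \ref{local-thm}. The resulting constant $A$ depends only on $\phi$ (through $C_\phi$ and the submultiplicativity constant), on $\alpha$, and on the Sobolev constant $C$ in \eqref{non-hom-Sobolev-bis}, as claimed.
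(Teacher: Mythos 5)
Your first two steps are sound, and in fact more direct than the paper's own route: substituting $T_tx$ into \eqref{non-hom-Sobolev-bis} and using the $L_2$ spectral bound $\|\psi(L)^{\alpha}e^{-tL}\|_{2\to 2}\leq \sup_{\lambda>0}|\psi(\lambda)^{\alpha}e^{-t\lambda}|\leq C\phi(t)^{-\alpha}$ (available either as \eqref{phicontrolpsi} from Lemma \ref{viiimpliesiii}, or, as the paper does here, via the submultiplicativity \eqref{condition-phi}) gives $\|T_tx\|_q\leq C(\phi(t)^{-\alpha}+1)\|x\|_2$; the paper reaches essentially the same $(2\to q)$ estimate more laboriously through $T_1$, $(I-T_1)^k$ and a binomial decomposition. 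The duality/composition step yielding a $(q'\to q)$ bound is also fine.

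The gap is the extrapolation from this single smoothing estimate to $L_1\to L_\infty$, which is exactly the step you label ``the main obstacle'' and then assert away. First, ``re-injecting'' \eqref{non-hom-Sobolev-bis} can never produce an output exponent larger than $q$: its left-hand side is always the $L_q$-norm, so your staircase with $q_n\nearrow\infty$, $p_n\searrow 1$ cannot be built from the Sobolev inequality itself; the raising of exponents must come from the semigroup property combined with interpolation against $L_\infty$-contractivity. Second, no finite number of such interpolation/composition steps reaches $(1,\infty)$: the standard self-improvement is an infinite Moser--Coulhon iteration (interpolate the $(2\to q)$ bound with $L_\infty$-contractivity to get $(p\to pq/2)$ for all $p\geq 2$, compose along a geometric partition of $t$, verify convergence of the accumulated constants and of the product of $\phi$-factors against $\phi(t)^{-1/2}$ using the $\Delta_2$ condition, then dualize and compose once more), none of which is carried out in your sketch, and the role you assign to \eqref{condition-phi} there is not where the paper uses it. The paper sidesteps this entirely: since $\|T_tx\|_q\leq C(\phi(t)^{-\alpha}+1)\|x\|_2$ holds for all $t>0$, it passes to $\widetilde T_t=e^{-t}T_t$ (generator $I+L$), for which the additive term can be absorbed for every $t>0$ because $\Delta_2$ forces at most polynomial growth of $\phi$, so $\|\widetilde T_tx\|_q\leq C\phi(t)^{-\alpha}\|x\|_2$ globally in $t$; the already proved implication (iii)$\Rightarrow$(i) of Theorem \ref{phipsiultra} then gives $\|\widetilde T_tx\|_\infty\leq C\phi(t)^{-1}\|x\|_1$, and \eqref{local-uc-bis} follows on $(0,1)$ since $e^t\leq e$ there. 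To complete your argument you should either reduce to Theorem \ref{phipsiultra} in this way or actually perform the infinite iteration with its bookkeeping; as written, the proof is incomplete at its decisive step.
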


\begin{proof}
Putting $T_1x$ in \eqref{non-hom-Sobolev-bis}, we have
\be
\|T_1x\|_q\leq C(\|\psi(L)^{\alpha }T_1 x\|_2+ \|x\|_2)\;.
\ee
Note that $\psi(L)^{\alpha }T_1= m(L)$ with $m(\lambda) = \psi(\lambda)^\alpha e^{-\lambda}$. Since $(\phi,\psi)$ is regularly relative,
\be
m(\lambda)& \approx e^{-\lambda} \big( \int _0^\8 e^{-\lambda t} \phi(t)^\alpha \frac{dt}{t}\big)^{-1}.
\ee
Lebesgue's monotone convergence theorem then ensures
$$\lim_{\lambda\ra 0} m(\lambda) \approx\big(\lim_{\lambda\ra 0} \int _0^\8 e^{-\lambda t} \phi(t)^\alpha \frac{dt}{t}\big)^{-1}<\8.$$
 On the other hand, by the $\Delta_2$ condition of $\phi,$ for $\lambda$ large enough,
\be\begin{split}
m(\lambda)&\approx e^{-\lambda} \bigg[ \int _0^\8 e^{- t} \phi\big(\frac t \lambda \big)^\alpha \frac{dt}{t}\bigg]^{-1}\\
&\leq e^{-\lambda}  \bigg[ \int _0^\8 e^{- t} \phi(t )^\alpha \lambda ^{-\alpha \log _2C_\phi} \frac{dt}{t}\bigg]^{-1}\\
&\leq \lambda ^{\alpha \log _2C_\phi} e^{-\lambda}  \bigg[ \int _0^\8 e^{- t} \phi(t )^\alpha \frac{dt}{t}\bigg]^{-1} \,.
\end{split}\ee
Then we know that $m(\cdot)$ is a bounded function on $\real^+$, whence $\|\psi(L)^{\alpha }T_1 x\|_2\leq C\|x\|_2.$ Thus we obtain
\beq\label{T1-2toq}
\|T_1x\|_q\leq C\|x\|_2\;.
\eeq

Moreover, putting $(I-T_1)^kx$ in \eqref{non-hom-Sobolev-bis}, where $k$ is a positive integer to be chosen later, we have
\beq\label{I-T1}
\begin{split}
\|(I-T_1)^kx\|_q &\leq C(\|\psi(L)^{\alpha }(I-T_1)^k x\|_2+ \|(I-T_1)^kx\|_2)\\
&\leq C(\|\psi(L)^{\alpha } x\|_2+ \|\psi(L)^{-\alpha }(I-T_1)^k\psi(L)^{\alpha }x\|_2)\;.
\end{split}\eeq
We will show that $\psi(L)^{-\alpha }(I-T_1)^k$ is bounded on $L_2(\M)$. Again, it suffices to show the boundedness of $m_1(\lambda) = \psi(\lambda)^{-\frac \alpha k} (1-e^{-\lambda})$ on $\real^+$. Since
$$\psi(\lambda)^{-\frac \alpha k}\approx \int _0^\8 e^{- t} \phi\big(\frac t \lambda \big)^{\frac \alpha k} \frac{dt}{t},$$
and $\phi$ is increasing, $m_1(\lambda)\leq C \int _0^\8 e^{- t} \phi(t)^{\frac \alpha k} \frac{dt}{t}$ for all $\lambda\geq 1$. For $\lambda \in (0,1)$, the $\Delta_2$ condition of $\phi$ ensures
$$\phi(\frac t \lambda)\leq C_\phi^{-\log_2 \lambda}\phi (t),$$
whence
$$\phi(\frac t \lambda)^{\frac \alpha k} \leq  \lambda ^{-\frac{\alpha \log_2 C_\phi }{k}} \phi (t)^{\frac \alpha k}.$$
Thus, if $k$ is chosen so that $\frac{C_\phi \alpha}{k}<1,$
$$m_1(\lambda)\leq C \lambda ^{-\frac{C_\phi \alpha}{k}} (1-e^{-\lambda}) \int _0^\8 e^{- t} \phi(t)^{\frac \alpha k} \frac{dt}{t}$$
is then bounded for $\lambda\in (0,1)$. This gives the boundedness of $\psi(L)^{-\alpha }(I-T_1)^k$ on $L_2(\M).$ Thus \eqref{I-T1} reads
\beq\label{I-T1-conclusion}
\|(I-T_1)^kx\|_q \leq C\|\psi(L)^{\alpha } x\|_2\;
\eeq
for chosen $k$. Note that this $k$ depends on $\phi$ and $\alpha$.

We now use \eqref{condition-phi}, \eqref{T1-2toq} and \eqref{I-T1-conclusion} to deduce \eqref{local-uc-bis}. Indeed, rewrite
$$T_t(I-T_1)^k = \psi (L)^{-\alpha} (I-T_1)^k \psi(L)^{\alpha}T_t \,.$$
Denote $m_t(L)=\psi(L)^{\alpha}T_t$ with $m_t(\lambda)=\psi(\lambda)^\alpha e^{-t\lambda } $. By hypothesis \eqref{condition-phi},
\be\begin{split}
\sup_{\lambda\in \real^+} m_t(\lambda)&\approx\sup_{\lambda\in \real^+} e^{-t\lambda} \big( \int _0^\8 e^{-\lambda s} \phi(s)^\alpha \frac{ds}{s}\big)^{-1}\\
&=\sup_{\lambda\in \real^+} e^{-\lambda} \big( \int _0^\8 e^{- s} \phi(\frac{st}{\lambda})^\alpha \frac{ds}{s}\big)^{-1}\\
&\leq  \phi(t)^{-\alpha}\sup_{\lambda\in \real^+} e^{-\lambda} \big( \int _0^\8 e^{- s} \phi(\frac{s}{\lambda})^\alpha \frac{ds}{s}\big)^{-1}\,.
\end{split}\ee
Here, the last supremum is bounded, following from the discussion in first part of this proof. Thus $|||m_t(L)|||_{2}\leq C \phi(t)^{-\alpha}$. Applying \eqref{I-T1-conclusion}, we have
$$|||T_t(I-T_1)^k|||_{2\ra q}\leq C \phi(t)^{-\alpha}\,.$$
It follows from \eqref{T1-2toq} that for $x\in L_2(\M)$,
\be\begin{split}
\|T_t x\|_q& \leq  \|T_t(I-T_1)^k x\|_q +\sum_{1\leq j\leq k}C_{j,k} \|T_{t+j} x\|_q\\
&\leq  C_{\phi,\alpha}(\phi(t)^{-\alpha}+1)\|x\|_2\,.
\end{split}\ee
Define a new semigroup $\widetilde T_t =e^{-t}T_t$. Then $\|\widetilde T_t x\|_q \leq C_{\phi,\alpha}\phi(t)^{-\alpha}\|x\|_2$. By Theorem \ref{phipsiultra}, we have
$$\|\widetilde T_t x\|_\8 \leq C_{\phi,\alpha}\phi(t)^{-1}\|x\|_1\,,$$
which gives \eqref{local-uc-bis}.

\end{proof}

To prove the converse implication, we need to introduce the subordinated semigroups. See \cite[Section~IX.11]{Yosida} for more details. The subordinated semigroup of order $\alpha \in (0,1)$ of $T_t$ is the semigroup
$$T_{t,\alpha} =\int_0^\8 f_{t,\alpha}(s)T_s ds\,,$$
where $f_{t,\alpha}$ is the function whose Laplace transform is the function $z\mapsto e^{-tz^\alpha}$ for $\rm{Re}(z)>0$. $(T_{t,\alpha})_{t>0}$ is then a semigroup, whose the infinitesimal generator is given by $$L^\alpha =\frac{\sin \alpha \pi}{\pi}\int_0^\8 s^{\alpha -1}(sI+L)^{-1}ds\,.$$
The function $f_{t,\alpha}$ is determined by the Post-Widder inversion formula
 $$f_{t,\alpha}(s) =\lim_{n\ra \8} \frac{(-1)^n}{n!}(\frac n s)^{n+1} \rho^{(n)}(\frac n s)\,,\;s>0$$
 where $\rho (a) =e^{-ta^\alpha}=\int_0^\8 e^{-sa} f_{t,\alpha}(s)ds.$  One can then deduces that $f_{t,\alpha}\geq 0$ and
$$\int_0^\8 f_{t,\alpha}(s)ds =1\,.$$
Whence
\beq\label{subordiT}
T_{t,\alpha} =\int_0^\8 f_{1,\alpha}(s)T_{st^{\frac 1 \alpha}}ds\,.\eeq
Moreover,
\beq\label{subordi-estimate}
\int_0^\8 s^{-n }f_{1,\alpha}(s)ds <\8,\;\;\forall\; n>0.\eeq

Now we are able to conclude the proof of Theorem \ref{local-thm}. The argument below is verbatim the same as in the commutative case. See \cite{VSCC1992}.

\begin{proof}[End of proof of Theorem \ref{local-thm}]
Firstly we assume that $\alpha \nu <1$. By \eqref{local-uc} and the contractivity of $T_t$, we have
$$|||T_t|||_{1\ra \8}\leq A\,,\;\;\forall \;t \geq 1.$$
Then \eqref{subordiT} and \eqref{subordi-estimate} ensure that for $0<t<1$
\be\begin{split}
|||T_{t,\alpha\nu}|||_{1\ra \8} &\leq \int_0^\8 f_{1,\alpha}(s)|||T_{st^{\frac 1{\alpha \nu}}}|||_{1\ra\8}ds \\
&\leq  t^{-\frac{1}{\alpha}}\int_0^{t^{-\frac 1{\alpha \nu}}}A s^{-\nu} f_{1,\alpha}(s)ds +\int_{t^{-\frac 1{\alpha \nu}}}^\8 A f_{1,\alpha}(s) ds \\
&\leq Ct^{-\frac{1}{\alpha}}.
\end{split}\ee
Consider the semigroup $\bar T_t =e^{-t}T_{t,\alpha\nu}$ with infinitesimal generator $I+L^{\alpha\nu}$. The above inequality gives
$$|||\bar T_t|||_{1\ra \8}\leq C t^{-\frac{1}{\alpha}},\;\;\forall t>0.$$
It then follows from Theorem \ref{phipsiultra} that $\forall\; x\in \rm {Dom} (L)$,
$$\|x\|_q\leq C \|(I+L^{\alpha \nu})^{\frac{1}{\alpha}\cdot \alpha}x\|_2 =C(\|L^{\alpha \nu}x\|_2+ \|x\|_2).$$

In general, if $\alpha \nu \geq 1$, choose an integer $k>\alpha\nu$. Applying the above argument to $T_{t,\frac{\alpha\nu}{k}}$, we then get
$$\|x\|_q\leq C \|(I+L^{\frac{\alpha \nu}k})^{\frac{k}{\alpha}\cdot \alpha}x\|_2.$$
Thus the theorem follows from induction and the fact that, for an operator $P$ on $L_2(\M)$
$$\|Px\|_2\leq C(\|x\|_2+\|P^2x\|_2),$$
which is ensured by the spectral theorem and the easy inequality $t\leq 1+t^2$ for $0<t<\8$.
\end{proof}

We end this section with a result of localization at infinity, parallel to Theorem \ref{local-thm}.

\begin{thm}
Assume that $|||T_1|||_{1\ra \8}<\8 $. Then the property
\beq\label{local-uc-infty}
\|T_tx\|_\8\leq A t^{-\nu}\|x\|_1,\;\forall t\in(1,\8),\;\forall x\in L_1(\M),\eeq
is equivalent to
\beq\label{local-uc-sb}
\|x\|_q\leq C(\|L^{\alpha \nu}x\|_2+ \|L^{\alpha \nu}x\|_q)\;,\;\;\forall\; x\in {\rm {Dom}} (L),
\eeq
where $q>2$, $\alpha=\frac 1 2 -\frac 1 q$.
\end{thm}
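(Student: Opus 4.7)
The plan is to mirror the proof of Theorem~\ref{local-thm} with the roles of small and large $t$ interchanged, leaning on the multiplier machinery of Section~3 and on the assumption $|||T_1|||_{1\to\8}<\8$ to repair the points where the direct analogue breaks down. For \eqref{local-uc-sb}$\Rightarrow$\eqref{local-uc-infty}, I would apply the Sobolev inequality to $T_tx$ for $t>1$ and note that $L^{\alpha\nu}T_t=m_t(L)$ with $m_t(\lambda)=\lambda^{\alpha\nu}e^{-t\lambda}$ satisfies $\|m_t\|_\8\leq C t^{-\alpha\nu}$ and a H\"ormander condition of sufficient order with constant $\lesssim t^{-\alpha\nu}$; by Corollary~\ref{hormanderpp}, $|||m_t(L)|||_{p\to p}\leq C t^{-\alpha\nu}$ for $p=2,q$, which gives
\beqn
\|T_tx\|_q\leq Ct^{-\alpha\nu}\bigl(\|x\|_2+\|x\|_q\bigr),\quad t>1.
\eeqn
Since $|||T_1|||_{1\to\8}<\8$ and $\tau$ is finite, $\|T_1z\|_2+\|T_1z\|_q\leq C\|z\|_1$, and writing $T_{t+1}z=T_t(T_1z)$ yields $|||T_s|||_{1\to q}\leq Cs^{-\alpha\nu}$ for $s>2$. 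A localized version of Theorem~\ref{phipsiultra}(iii)$\Rightarrow$(i), applied to the segment $t>2$ and absorbing the contribution of $t\in(0,2]$ into a multiplicative constant via $|||T_1|||_{1\to\8}<\8$, then produces $|||T_s|||_{1\to\8}\leq Cs^{-\nu}$ for $s>2$, which is \eqref{local-uc-infty}.

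For the reverse direction \eqref{local-uc-infty}$\Rightarrow$\eqref{local-uc-sb}, I would pick $\varphi(\lambda)=\int_0^\8 e^{-s\lambda}\rho(s)\,ds$ with $\rho\geq 0$ a smooth compactly supported density of integral $1$, so that $\varphi\in C^\8(\real^+)$ equals $1$ near the origin and $\varphi(L)=\int_0^\8\rho(s)T_s\,ds$ is a contraction on every $L_p(\M)$. Decompose $x=\varphi(L)x+(1-\varphi)(L)x$; the high-frequency piece equals $\kappa(L)L^{\alpha\nu}x$ with $\kappa(\lambda)=\lambda^{-\alpha\nu}(1-\varphi(\lambda))$ bounded and smooth on $\real^+$, so Corollary~\ref{hormanderpp} gives $\|(1-\varphi)(L)x\|_q\leq C\|L^{\alpha\nu}x\|_q$. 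For the low-frequency piece, write $\varphi(L)L^{-\alpha\nu}=\frac{1}{\Gamma(\alpha\nu)}\int_0^\8 t^{\alpha\nu-1}T_t\varphi(L)\,dt$ and split at $t=1$: the integral over $(0,1)$ is bounded by $L^q$-contractivity of $T_t$ and $\varphi(L)$ and yields a $C\|L^{\alpha\nu}x\|_q$ contribution, while for the integral over $(1,\8)$ the ultracontractivity \eqref{local-uc-infty}, the $L^1$-contractivity of $\varphi(L)$ and the finiteness of $\tau$ combine to give
\beqn
\|T_t\varphi(L)y\|_q\leq\tau(\mathbf 1)^{1/q}\|T_t\varphi(L)y\|_\8\leq Ct^{-\nu}\|\varphi(L)y\|_1\leq C't^{-\nu}\|y\|_2,
\eeqn
with $y=L^{\alpha\nu}x$. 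The resulting integral $\int_1^\8 t^{\alpha\nu-\nu-1}dt$ converges since $\alpha=\frac12-\frac1q<1$, and summing the two contributions yields \eqref{local-uc-sb}.

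The main obstacle lies in the first direction, namely in matching the sharp power $t^{-\nu}$ of the ultracontractive estimate against the exponent $\alpha\nu$ supplied by the $L^2\to L^q$ Sobolev inequality. A naive factorization $T_{ns}=T_s^{\circ n}$ of the semigroup does not recover the full exponent, so a careful localized use of Theorem~\ref{phipsiultra} (or an explicit bootstrap along the subordinated semigroup $T_{t,\alpha\nu}$ analogous to the end of the proof of Theorem~\ref{local-thm}) is required, with close tracking of the constants. The second direction is more routine once the cutoff $\varphi(L)$ is simultaneously bounded on $L^1,L^2,L^q$, which the Laplace-transform recipe above arranges cleanly.
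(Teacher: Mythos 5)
Your proposed proof of \eqref{local-uc-infty}$\Rightarrow$\eqref{local-uc-sb} is a genuine alternative to the paper's. You decompose $x$ via a Laplace--transform cutoff $\varphi(L)=\int_0^\8\rho(s)T_s\,ds$ into low- and high-frequency pieces, whereas the paper uses the subordinated semigroup and writes $\|x\|_q\le\|T_{1,\beta}x\|_q+\|(I-T_{1,\beta})x\|_q$, bounding $\|T_{1,\beta}x\|_q\le C\|L^{\alpha\nu}x\|_2$ by adapting Lemma~\ref{iimpliesiv} and $\|(I-T_{1,\beta})x\|_q\le\int_0^1\|L^\beta T_{t,\beta}x\|_q\,dt\le C\|L^{\alpha\nu}x\|_q$. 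Both constructions have the same shape (a piece controlled in $L_2$ by ultracontractivity at large times, a complementary piece controlled in $L_q$ by a bounded multiplier $\lambda^{-\alpha\nu}(1-\cdot)$). Two small points: $\varphi$ does not equal $1$ \emph{near} the origin (only at the origin), though $1-\varphi$ vanishing to first order there is what you actually use; and your high-frequency symbol $\kappa(\lambda)=\lambda^{-\alpha\nu}(1-\varphi(\lambda))$ is bounded near $0$ only when $\alpha\nu\le 1$, so for $\alpha\nu>1$ you need to replace $1-\varphi$ by $(1-\varphi)^k$ with $k>\alpha\nu$, exactly as the paper takes a power of $I-T_{1,\beta}$; also, it is cleaner to invoke \eqref{mstrongpp} (the $H^\8(\Gamma_\omega)$ multiplier bound, which needs no extra hypothesis on $|||L^{iu}|||_p$) rather than Corollary~\ref{hormanderpp}.

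In the direction \eqref{local-uc-sb}$\Rightarrow$\eqref{local-uc-infty} there is a genuine gap, and you correctly identify where it is but do not close it. From the Sobolev inequality and the multiplier bound you arrive at $|||T_s|||_{2\to q}\lesssim s^{-\alpha\nu}$ and then $|||T_s|||_{1\to q}\lesssim s^{-\alpha\nu}$, and as you note the exponent $\alpha\nu=\nu(\tfrac12-\tfrac1q)$ is strictly smaller than the exponent $\nu(1-\tfrac1q)$ that Theorem~\ref{phipsiultra}(iii)$\Rightarrow$(i) would require to produce $s^{-\nu}$. Appealing to ``a careful localized use of Theorem~\ref{phipsiultra}'' or to ``a bootstrap along $T_{t,\alpha\nu}$'' is not a proof: composing $T_{ns}=T_s^{\circ n}$ only reproduces the same $\alpha\nu$, and the subordination device used in Theorem~\ref{local-thm} runs in the opposite direction (from ultracontractivity to Sobolev). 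The missing ingredient is the paper's Moser-type self-improvement. Concretely, one first establishes $|||T_{t+1}|||_{2\to q}\le Ct^{-\alpha\nu}$ (by applying the Sobolev inequality to $T_1x$, using $|||T_1|||_{2\to q}<\8$, and factoring $T_{t+1}=T_1L^{-\alpha\nu}\,L^{\alpha\nu}T_t$), then sets $K(r,x)=\sup_{1<t<r}t^{\nu(1-1/q)}\|T_tx\|_q/\|x\|_1$ and writes $T_{2t+1}=T_{t+1}T_t$; combining $\|T_{2t+1}x\|_q\le Ct^{-\alpha\nu}\|T_tx\|_2$ with the interpolation $\|T_tx\|_2\le\|T_tx\|_1^{1-\theta}\|T_tx\|_q^{\theta}$ (where $\tfrac12=1-\theta+\tfrac{\theta}{q}$) produces exactly the exponent $\nu(1-\tfrac1q)$ and the inequality $K(2r+1,x)\le CK(r,x)^{\theta}$. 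Since $\theta<1$ and $K$ is finite (by $|||T_1|||_{1\to\8}<\8$), this yields the uniform bound $K(r,x)\le C^{1/(1-\theta)}$, and duality finishes. Without this (or an equivalent) iteration, your proposal does not prove this implication.
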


\begin{proof}

Firstly we prove the implication \eqref{local-uc-sb}$\Rightarrow$\eqref{local-uc-infty}. By interpolation, $|||T_1|||_{1\ra \8}<\8 $ implies $|||T_1|||_{2\ra q}<\8$, whence
$$\|L^{\alpha \nu}T_1 x\|_q\leq C \|L^{\alpha \nu} x\|_2.$$
\eqref{local-uc-sb} then gives
\be
\|T_1 x\|_q\leq C(\|L^{\alpha \nu}T_1 x\|_2+ \|L^{\alpha \nu}T_1x\|_q)\leq C\|L^{\alpha \nu} x\|_2.
\ee
Writing $T_{t+1} = T_1 L^{-\alpha \nu}L^{\alpha \nu} T_t$, we deduce from the last inequality that
\beq\label{norm-1+t}
|||T_{t+1} |||_{2\ra q} \leq C   |||T_1 L^{-\alpha \nu}|||_{2\ra q} |||L^{\alpha \nu} T_t |||_{2\ra 2}\leq C t^{-\alpha \nu}.
\eeq
Now for $r>1$ and $x\in L_1(\M)$, set
$$K(r, x) =\sup _{1<t<r} \frac{t ^{\nu (1-\frac 1 q)}\|T_t x\|_q}{\|x\|_1}.$$
If $1<t<r$, it follows from \eqref{norm-1+t} that
\be\begin{split}
\|T_{2t+1} x\|_q &\leq C t^{-\alpha \nu}\|T_t x\|_2\\
& \leq C t^{-\alpha \nu} \|T_tx \|_1^{1-\theta}\|T_t x\|_q^\theta\\
&\leq C t^{-\alpha \nu} \|T_tx \|_1^{1-\theta} t^{-\theta \nu (1-\frac 1 q )} \|x\|_1^\theta K(r,x)^\theta\\
& \leq C t^{- \nu(1-\frac 1 q )} \|x\|_1  K(r,x)^\theta,
\end{split}
\ee
where $\theta \in (0,1)$ so that $\frac 1 2 = 1-\theta +\frac \theta q$. Whence
\be\begin{split}
\sup _{3<t<2r+1} \frac{t ^{\nu (1-\frac 1 q)}\|T_{t} x\|_q}{\|x\|_1}&=\sup _{1<t<r} \frac{(2t+1) ^{\nu (1-\frac 1 q)}\|T_{2t+1} x\|_q}{\|x\|_1}\\
&\leq C\sup _{1<t<r} \frac{t ^{\nu (1-\frac 1 q)}\|T_{2t+1} x\|_q}{\|x\|_1}\leq C K(r,x)^\theta.
\end{split}\ee
On the other hand, since $|||T_1|||_{1\ra \8}<\8 $, we have a uniform constant $A$ such that
$$\sup _{1<t\leq 3} \frac{t ^{\nu (1-\frac 1 q)}\|T_{t} x\|_q}{\|x\|_1}<A.$$
Therefore, we can find a constant $C$ independent of $r$ and $x$ such that
$$ K(r,x)\leq  K(2r+1,x) =\sup _{1<t<2r+1} \frac{t ^{\nu (1-\frac 1 q)}\|T_{t} x\|_q}{\|x\|_1}\leq C K(r,x)^\theta.$$
Thus we have for every $t>1$,
$$\frac{t ^{\nu (1-\frac 1 q)}\|T_{t} x\|_q}{\|x\|_1}<C^{\frac 1 {1-\theta }}.$$
Then, to conclude the proof of \eqref{local-uc-infty}, we need only to use a dual argument starting from \eqref{norm-1+t}.

For the opposite, we first assume that $\alpha\nu <\beta $. Note that
$$L^{-\alpha \nu}T_1 x =\int_0^\8 t^{\alpha \nu} T_{t+1}x \frac{dt}{t}.$$
Then the proof of Lemma \ref{iimpliesiv} can be adapted to give
$$\|T_1 x\|_q \leq C \|L^{\alpha\nu}x\|_2.$$
Moreover, for $0<\beta<1$, the subordinate semigroup $(T_{t,\beta})_{t>0}$ satisfies
$$L^{-\alpha \nu}T_{1,\beta}\, x =\int_0^\8 t^{\frac {\alpha \nu}{\beta}} T_{t+1,\beta}\,x \frac{dt}{t},$$
whence also
$$\|T_{1,\beta} x\|_q \leq C \|L^{\alpha\nu}x\|_2.$$
It then follows that
$$
\|x\|_q\leq  \|T_{1,\beta}\,x\|_q+ \|(I-T_{1,\beta})x\|_q\leq C\|L^{\alpha\nu}x\|_2+ \|(I-T_{1,\beta})x\|_q.
$$
So it remains to establish the term $\|(I-T_{1,\beta})x\|_q$, which is controlled by
$$\|(I-T_{1,\beta})x\|_q \leq \int_0^1 \|L^\beta T_{t,\beta} x\|_q  dt\leq C\|L^{\alpha \nu}x\|_q.$$
The general case where $\alpha\nu \geq \beta $ can be treated by considering the power of $I-T_{1,\beta}$, similar to the last step of the proof of Theorem \ref{local-thm}.

\end{proof}

\n{\bf Acknowledgements.} The author is greatly indebted to professor Quanhua Xu for having suggested to him the subject of this paper, for many helpful discussion and very careful reading of this paper.


\end{document}